\providecommand{\U}[1]{\protect\rule{.1in}{.1in}}
\newtheorem{theorem}{Theorem}[section]
\newtheorem{corollary}{Corollary}[section]
\newtheorem{lemma}{Lemma}[section]
\newtheorem{proposition}{Proposition}[section]
\newtheorem{remark}{Remark}[section]
\newenvironment{proof}[1][Proof]{\textbf{#1.} }{\ \rule{1em}{1em}}
\begin{document}

\title{Samll BGK waves and nonlinear Landau damping (higher dimensions)}
\author{Zhiwu Lin and Chongchun Zeng\\School of Mathematics\\Georgia Institute of Technology\\Atlanta, GA 30332, USA}
\date{}
\maketitle

\begin{abstract}
Consider Vlasov-Poisson system with a fixed ion background and periodic
condition on the space variables, in any dimension $d\geq2$. First, we show
that for general homogeneous equilibrium and any periodic $x-$box, within any
small neighborhood in the Sobolev space $W_{x,v}^{s,p}\ \left(  p>1,s<1+\frac
{1}{p}\right)  \ $of the steady distribution function, there exist nontrivial
travelling wave solutions (BGK waves) with arbitrary traveling speed. This
implies that nonlinear Landau damping is not true in $W^{s,p}\left(
s<1+\frac{1}{p}\right)  \ $space for any homogeneous equilibria and in any
period box. The BGK waves constructed are one dimensional, that is, depending
only on one space variable. Higher dimensional BGK waves are shown to not
exist. Second, for homogeneous equilibria satisfying Penrose's linear
stability condition, we prove that there exist no nontrivial invariant
structures in the $\left(  1+\left\vert v\right\vert ^{2}\right)  ^{b}%
$-weighted $H_{x,v}^{s}$ $\left(  b>\frac{d-1}{4},\ s>\frac{3}{2}\right)  $
neighborhood. Since arbitrarilly small BGK waves can also be constructed near
any homogeneous equilibria in such weighted $H_{x,v}^{s}$ $\left(  s<\frac
{3}{2}\right)  $ norm, this shows that $s=\frac{3}{2}$ is the critical
regularity for the existence of nontrivial invariant structures near stable
homogeneous equilibria. These generalize our previous results in the one
dimensional case.

\end{abstract}

\section{Introduction}

Consider a collisionless electron plasma with a fixed homogeneous neutralizing
ion background. The Vlasov-Poisson system in $d$ dimension is
\begin{subequations}
\label{vpe}
\begin{equation}
\partial_{t}f+v\cdot\nabla_{x}f-\vec{E}\cdot\nabla_{v}f=0, \label{vlasov}%
\end{equation}
\begin{equation}
\,\,\,\,E=-\nabla_{x}\phi,\ \ -\Delta\phi=-\int_{\mathbf{R}^{d}%
}f\ dv+1,\label{poisson}%
\end{equation}
\end{subequations}
where $f\left(  t,x,v\right)  \geq0$ is the distribution function, $E\left(
x,t\right)  $ is the electrical field and $\phi\left(  x,t\right)  $ is the
electrical potential. We consider the Vlasov-Poisson system in a $x-$periodic
box, with periods $T_{i}$ in $x_{i}$. In 1946, Landau \cite{landau}, looking
for analytical solutions of the linearized Vlasov-Poisson system around
Maxwellian $\left(  e^{-\frac{1}{2}v^{2}},0\right)  $, pointed out that the
electric field is subject to time decay even in the absence of collisions. The
effect of this Landau damping, as it is subsequently called, plays a
fundamental role in the study of plasma physics. However, Landau's treatment
is in the linear regime; that is, only for infinitesimally small initial
perturbations. Recently, nonlinear Landau damping was shown (\cite{villani09})
for analytical perturbations of stable equilibria with linear exponential
decay. For general perturbations in Sobolev spaces, the proof of nonlinear
damping remains open. We refer to \cite{lin-zeng-bgk} \cite{villani09} for
more discussions and references on this topic. In \cite{lin-zeng-bgk}, the
following results were obtained for $1$D Vlasov-Poisson system: First, we show
that for general homogeneous equilibria, within any small neighborhood in the
Sobolev space $W^{s,p}\ \left(  p>1,s<1+\frac{1}{p}\right)  \ $of the steady
distribution function, there exist nontrivial travelling wave solutions (BGK
waves) with arbitrary minimal period and traveling speed. This implies that
nonlinear Landau damping is not true in $W^{s,p}\left(  s<1+\frac{1}%
{p}\right)  \ $space for any homogeneous equilibria and any spatial period.
Second, it is shown that for homogeneous equilibria satisfying Penrose's
linear stability condition, there exist no nontrivial travelling BGK waves and
unstable homogeneous states in some $W^{s,p}$ $\left(  p>1,s>1+\frac{1}%
{p}\right)  \ $neighborhood. Furthermore, we prove that there exist no
nontrivial invariant structures in the $H^{s}$ $\left(  s>\frac{3}{2}\right)
$ neighborhood$~$of stable homogeneous states. In particular, these results
suggest the contrasting long time dynamics in the $H^{s}$ $\left(  s>\frac
{3}{2}\right)  $ and $H^{s}$ $\left(  s<\frac{3}{2}\right)  \ $neighborhoods
of a stable homogeneous state.

In this paper, we generalize above results to higher dimensions ($d=2,3$).
Denote the fractional order Sobolev spaces$\ $by $W^{s,p}\left(
\mathbf{R}^{d}\right)  \ $or $W_{x_{1},v}^{s,p}\left(  \left(  0,T_{1}\right)
\times\mathbf{R}^{d}\right)  $ with $p>1,s\geq0$. These spaces are the complex
interpolation of of $L^{p}$ space and Sobolev spaces $W^{m,p}$ $\left(
m\ \text{positive integer}\right)  $. Our first result is to construct ($1$D)
BGK waves in $W_{x_{1},v}^{s,p}\ \left(  s<1+\frac{1}{p}\right)  \ $spaces.

\begin{theorem}
\label{thm-existence}Assume the homogeneous distribution function
\[
f_{0}\left(  v\right)  \in W^{s,p}\left(  \mathbf{R}^{d}\right)
\ \ \ \ \left(  d\geq2,\ p>1,s\in\lbrack0,1+\frac{1}{p})\right)
\]
satisfies%
\[
f_{0}\left(  v\right)  \geq0,\ \int f_{0}\left(  v\right)  dv=1,\ \int
v^{2}f_{0}\left(  v\right)  dv<+\infty.
\]
Fix $T_{1}>0$ and $c\in\mathbf{R}$. Then for any $\varepsilon>0$, there exist
travelling BGK wave solutions of the form $f=f_{\varepsilon}\left(
x_{1}-ct,v\right)  ,$ $\vec{E}=E_{\varepsilon}\left(  x_{1}-ct\right)  \vec
{e}_{1}$ to (\ref{vpe}), such that$\ \left(  f_{\varepsilon}\left(
x_{1},v\right)  ,E_{\varepsilon}\left(  x_{1}\right)  \right)  $ has minimal
period $T_{1}$ in $x_{1}$,$\ f_{\varepsilon}\left(  x_{1},v\right)  \geq0,$
$E_{\varepsilon}\left(  x_{1}\right)  $ is not identically zero, and
\begin{equation}
\ \left\Vert f_{\varepsilon}-f_{0}\right\Vert _{L_{x_{1},v}^{1}}+\ \int
_{0}^{T_{1}}\int_{\mathbf{R}^{d}}\left\vert v\right\vert ^{2}\left\vert
f_{\varepsilon}\left(  x_{1},v\right)  -\ f_{0}\left(  v\right)  \right\vert
dx_{1}dv+\left\Vert f_{\varepsilon}-f_{0}\right\Vert _{W_{x_{1},v}^{s,p}%
}<\varepsilon.\ \label{norm-thm-stability}%
\end{equation}

\end{theorem}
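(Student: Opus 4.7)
The strategy is to reduce the $d$-dimensional construction to the one-dimensional existence theorem from our previous work \cite{lin-zeng-bgk}, exploiting the fact that the traveling wave ansatz depends only on the spatial variable $x_1$ and has its electric field directed along $\vec{e}_1$, so the transverse velocities $v_\perp=(v_2,\dots,v_d)$ act as passive parameters.

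Any profile $f(x_1-ct,v)$ together with a field $E(x_1-ct)\vec{e}_1$ solves the steady Vlasov equation in the moving frame if and only if $f$ is a function of the conserved quantities $v_\perp$ and the single-particle energy $e=\tfrac{1}{2}(v_1-c)^2+\phi(x_1)$, with possibly distinct dependences on the two untrapped branches $v_1\gtrless c$. The self-consistent Poisson equation couples to $f$ only through its marginal $\tilde{f}(x_1,v_1)=\int_{\mathbf{R}^{d-1}}f\,dv_\perp$, so the full electrostatic problem reduces to a one-dimensional Vlasov--Poisson system for $\tilde{f}$.

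Accordingly my plan is: (i) form the marginal $\tilde{f}_0(v_1)=\int_{\mathbf{R}^{d-1}} f_0(v_1,v_\perp)\,dv_\perp$, which after a standard mollification/truncation in $v_\perp$ (absorbing a small error into $\varepsilon$) lies in $W^{s,p}(\mathbf{R})$ with $\tilde{f}_0\geq 0$, $\int\tilde{f}_0\,dv_1=1$, and $\int v_1^2\,\tilde{f}_0\,dv_1<\infty$; (ii) apply the 1D theorem of \cite{lin-zeng-bgk} to $\tilde{f}_0$ with period $T_1$ and speed $c$ to obtain a $T_1$-periodic, nonconstant potential $\phi_\varepsilon$, a nontrivial field $E_\varepsilon=-\phi_\varepsilon'$, and profile functions $\mu_\varepsilon^\pm$ close to the reference profiles $\mu_0^\pm(e):=\tilde{f}_0(c\pm\sqrt{2e})$, so that the 1D BGK wave $\tilde{f}_\varepsilon(x_1,v_1):=\mu_\varepsilon^{\mathrm{sgn}(v_1-c)}\!\bigl(\tfrac{1}{2}(v_1-c)^2+\phi_\varepsilon(x_1)\bigr)$ is $\varepsilon$-close to $\tilde{f}_0$ in the 1D analogue of \eqref{norm-thm-stability}; (iii) lift to the full phase space by defining, on each untrapped branch $\pm(v_1-c)>0$,
$$
f_\varepsilon(x_1,v)=\mu_\varepsilon^\pm(e)\,R^\pm(e,v_\perp),\qquad e=\tfrac{1}{2}(v_1-c)^2+\phi_\varepsilon(x_1),
$$
where the kernels $R^\pm(e,v_\perp)\geq 0$ are probability densities in $v_\perp$ matching $f_0(c\pm\sqrt{2e},v_\perp)/\mu_0^\pm(e)$ wherever $\mu_0^\pm>0$ (smoothly extended elsewhere), and with an energy-only extension across the narrow trapped region $\{e<\max\phi_\varepsilon\}$. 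By construction $f_\varepsilon$ is a function of $(e,v_\perp)$ and therefore solves the steady Vlasov equation, while $\int_{\mathbf{R}^{d-1}} f_\varepsilon\,dv_\perp=\tilde{f}_\varepsilon$ ensures the Poisson equation holds with the same $\phi_\varepsilon$.

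I expect the main obstacle to be step (iii): verifying that the lifted $f_\varepsilon$ is $\varepsilon$-close to $f_0$ in the full $W^{s,p}_{x_1,v}$ norm while remaining nonnegative. The $v_\perp$-regularity is controlled directly by $\|f_0(\cdot,v_\perp)\|_{W^{s,p}}$ via Fubini, whereas the delicate $(x_1,v_1)$-regularity enters through the change of variables $e\mapsto c\pm\sqrt{2e}$; this square root produces exactly the derivative singularity at $v_1=c$ that is responsible for the critical threshold $s<1+\tfrac{1}{p}$, and its control is the heart of the 1D argument in \cite{lin-zeng-bgk}. The higher-dimensional estimate then follows by applying the 1D bound slicewise in $v_\perp$ and integrating via Minkowski; the $L^1$ and $|v|^2$-weighted $L^1$ pieces of \eqref{norm-thm-stability} are routine once the $W^{s,p}$ bound is in place. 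Positivity and the smooth treatment of the trapping set, which has measure $O(\|\phi_\varepsilon\|_\infty)$, are handled by the same averaging/reflection scheme used in 1D and absorbed into $\varepsilon$.
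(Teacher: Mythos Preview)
Your route is genuinely different from the paper's, and the difference matters for whether step~(iii) goes through. The paper does \emph{not} project to the marginal, invoke the 1D theorem as a black box, and then lift via a conditional density. Instead it repeats the 1D construction directly in $d$ variables: after mollifying and symmetrizing $f_0$ in $v_1$ near $v_1=c$, it writes the full smoothed profile as $f_1(v)=g_\pm(v_1^2,v_\perp)$ with smooth $g_\pm$, perturbs by a product-form scaling term $\frac{\gamma}{\delta}F_1\!\bigl(\tfrac{v_1}{\gamma\delta}\bigr)F_2(v_\perp)$, and takes the BGK wave to be $g_\pm(2e,v_\perp)+\frac{\gamma}{\delta}G_1\!\bigl(\tfrac{2e}{(\gamma\delta)^2}\bigr)F_2(v_\perp)$. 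The $W^{s,p}$ smallness then comes from a $d$-dimensional scaling lemma (Lemma~\ref{lemma-delta-0}), not from the 1D result.

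The gap in your proposal is the lifting. First, invoking the 1D theorem as a black box gives you only that $\tilde f_\varepsilon$ is close to $\tilde f_0$ in $W^{s,p}_{x_1,v_1}$; it does \emph{not} give usable control on $\mu_\varepsilon^\pm$ as functions of the energy $e$. In the actual 1D construction the profile is perturbed by a term like $\frac{\gamma}{\delta}G_1\!\bigl(\tfrac{2e}{(\gamma\delta)^2}\bigr)$ whose $e$-derivatives blow up as $\gamma\delta\to0$; the smallness in $W^{s,p}$ emerges only \emph{after} composing with $e=\tfrac12 v_1^2$ and using the critical scaling behavior for $s<1+\tfrac1p$. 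When you multiply this by $R^\pm(e,v_\perp)$ and try to bound the product in $W^{s,p}_{x_1,v}$, you need $R^\pm$ to act as a multiplier uniformly in $v_\perp$, which requires regularity of $R^\pm$ in $e$---and that in turn forces you to make the \emph{full} $f_0$ (not just its marginal) even in $v_1$ near $c$, exactly the preprocessing the paper performs but your sketch omits. Second, your ``apply the 1D bound slicewise in $v_\perp$'' cannot mean applying the 1D theorem to each slice (you have one BGK wave, built for the marginal), so it is really a multiplier/product estimate on each $v_\perp$-slice; this is not the 1D theorem and you have not stated the needed bounds on $R^\pm=f_0/\tilde f_0$, which are delicate where $\tilde f_0$ is small. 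Once you carry out the evenness preprocessing on the full $f_0$ and write it as $g_\pm(v_1^2,v_\perp)$, the detour through $R^\pm$ becomes unnecessary---and you have essentially arrived at the paper's direct construction.
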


In Proposition \ref{prop-non-bgk}, we show that there exist no $2$D and $3$D
BGK waves. Therefore, the form of $1$D BGK waves in Theorem
\ref{thm-existence} is somehow necessary.

For any $b>\frac{d-1}{4},\ $we denote $H_{v}^{s,b}\left(  \mathbf{R}%
^{d}\right)  $ to be the $\left(  1+\left\vert v\right\vert ^{2}\right)
^{b}\ $weighted $H^{s}$ space, that is,
\begin{equation}
H_{v}^{s,b}\left(  \mathbf{R}^{d}\right)  =\left\{  f\ |\left\Vert \left(
1+\left\vert v\right\vert ^{2}\right)  ^{b}\left(  1-\Delta\right)  ^{\frac
{s}{2}}f\right\Vert _{L^{2}\left(  \mathbf{R}^{d}\right)  }<\infty\right\}  .
\label{definition-H-tilde}%
\end{equation}
and
\[
\left\Vert f\right\Vert _{H_{v}^{s,b}}=\left\Vert \left(  1+\left\vert
v\right\vert ^{2}\right)  ^{b}\left(  1-\Delta\right)  ^{\frac{s}{2}%
}f\right\Vert _{L^{2}\left(  \mathbf{R}^{d}\right)  }.
\]
Let $\mathbf{T}^{d}$ be a periodic box with periods $T_{i}\ $in $x_{i}$
$\left(  i=1,\cdots,d\right)  $, and
\[
\mathbf{Z}^{d}=\left\{  \left(  \frac{2\pi}{T_{1}}j_{1},\cdots,\frac{2\pi
}{T_{d}}j_{d}\right)  \ |\ j_{1},\cdots,j_{d}\text{ are integers}\right\}  .
\]
We define the space $H_{x}^{s_{x}}H_{v}^{s_{v},b}\left(  \mathbf{T}^{d}%
\times\mathbf{R}^{d}\right)  $ by%
\[
h=\sum_{\vec{k}\in\mathbf{Z}^{d}}e^{i\vec{k}\cdot x}h_{\vec{k}}\left(
v\right)  \in H_{x}^{s_{x}}H_{v}^{s_{v},b}%
\]
if $\ $
\[
\text{ }\left\Vert h\right\Vert _{H_{x}^{s_{x}}H_{v}^{s_{v},b}}=\left(
\left\Vert h_{\vec{0}}\right\Vert _{H_{v}^{s_{v},b}}^{2}+\sum_{0\neq\vec{k}%
\in\mathbf{Z}^{d}}\left\vert \vec{k}\right\vert ^{2s_{x}}\left\Vert h_{\vec
{k}}\right\Vert _{H_{v}^{s_{v},b}}^{2}\right)  ^{\frac{1}{2}}<\infty.
\]
The following Theorem excludes any nontrivial invariant structures (steady,
time periodic, quasi-periodic etc) near stable homogeneous equilibria in the
$H_{x}^{s_{x}}H_{v}^{s_{v},b}\ $spaces of high $v-$regularity.

\begin{theorem}
\label{thm-non-existence}Consider the homogeneous profile
\begin{equation}
f_{0}\left(  v\right)  \in H^{s_{0},b}\left(  \mathbf{R}^{d}\right)
\ \ \ \left(  d\geq2,\ s_{0}>\frac{3}{2},\ b>\frac{d-1}{4}\right)  ,
\label{condition f_0}%
\end{equation}
Let $T^{d}$ be a periodic box with periods $T_{i}\ $in $x_{i}$ $\left(
i=1,\cdots,d\right)  .\ $Assume that $f_{0}\left(  v\right)  $ satisfies the
Penrose stability condition (\ref{penrose-condition}) for $\left(
T_{1},\cdots,T_{d}\right)  $. Let $\left(  f\left(  x,v,t\right)  ,\vec
{E}\left(  x,v,t\right)  \right)  $ be a solution of (\ref{vpe}) in $T^{d}.$

For any $\left(  s_{x},s_{v}\right)  $ satisfying
\begin{equation}
s_{x}\geq0,\ s_{x}>\frac{d-3}{2},\ \text{and \ }\frac{3}{2}<s_{v}\leq s_{0},
\label{condition-non-existence}%
\end{equation}
$\ $there exists $\varepsilon_{0}>0$, such that if
\begin{equation}
\left\Vert f\left(  t\right)  -f_{0}\right\Vert _{H_{x}^{s_{x}}H_{v}^{s_{v}%
,b}}<\varepsilon_{0},\ \text{for\ all\ }t\in\mathbf{R},
\label{assumption-thm-invariant-2d}%
\end{equation}
then $\vec{E}\left(  t\right)  \equiv\vec{0}$ for all $t\in\mathbf{R}.$
\end{theorem}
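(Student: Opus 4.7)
The plan is to adapt the one-dimensional strategy from \cite{lin-zeng-bgk}, combining the Penrose linear-stability bound with a bootstrap estimate on the nonlinear self-interaction, closed thanks to the weighted $v$-regularity of $g=f-f_0$.

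First I would rewrite the Vlasov--Poisson system as an equation for $g$,
\begin{equation*}
\partial_t g + v\cdot\nabla_x g - \vec E\cdot\nabla_v f_0 = \vec E\cdot\nabla_v g,\qquad -\Delta_x\phi=-\!\int g\,dv,\ \vec E=-\nabla_x\phi,
\end{equation*}
and expand $g(t,x,v)=\sum_{\vec k\in\mathbf{Z}^d}\hat g_{\vec k}(t,v)\,e^{i\vec k\cdot x}$. For each $\vec k\neq\vec 0$, integrating out the $d-1$ transverse velocity variables reduces the linear part to a one-dimensional transport equation in the wave direction $\hat{\vec k}=\vec k/|\vec k|$, driven by the marginal
\[
F_0^{\hat{\vec k}}(u)\;=\;\int_{\hat{\vec k}^{\perp}}f_0\bigl(u\hat{\vec k}+v_{\perp}\bigr)\,dv_{\perp}.
\]
Since by (\ref{assumption-thm-invariant-2d}) the perturbation is uniformly small for all $t\in\mathbf R$, I would then take the temporal Fourier transform mode by mode, producing an algebraic identity for $\widehat{\vec E}(\vec k,\tau)$ whose linear kernel is exactly the $1$D Penrose dispersion function for $F_0^{\hat{\vec k}}$.

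The Penrose hypothesis (\ref{penrose-condition}), applied to each marginal $F_0^{\hat{\vec k}}$, should yield a lower bound $|1-\mathcal K(\vec k,\tau)|\ge c_0>0$ uniform in $(\vec k,\tau)\in(\mathbf Z^d\setminus\{\vec 0\})\times\mathbf R$. Inverting this symbol gives a bootstrap inequality of the form
\[
\|\vec E\|_X\ \lesssim\ \|g\|_{H^{s_x}_xH^{s_v,b}_v}\,\|\vec E\|_X
\]
in a suitable space-time norm $X$ for $\vec E$, and the smallness $\varepsilon_0$ then forces $\vec E\equiv\vec 0$. In this estimate the three numerical conditions in (\ref{condition-non-existence}) each play a distinct role: $s_v>3/2$ lets a scalar $1$D Sobolev embedding along $\hat{\vec k}$ control $\partial_{\hat{\vec k}}f$ pointwise in the wave variable; $b>(d-1)/4$ is precisely the threshold making $\int(1+|v_\perp|^2)^{-2b}\,dv_\perp<\infty$, so that the undamped transverse integration is dominated by the weighted norm; and $s_x>(d-3)/2$ provides the algebra-type property needed on $\mathbf T^d$ when $d\ge 3$.

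The main obstacle is the nonlinear product estimate $\|\vec E\cdot\nabla_v g\|\lesssim\|\vec E\|_X\,\|g\|_{H^{s_x}_xH^{s_v,b}_v}$ with constants uniform in the mode $\vec k$, together with the delicate accounting of anisotropy: the transport-induced Penrose damping is a purely one-dimensional phenomenon acting only along $\hat{\vec k}$, while the $d-1$ transverse directions contribute nothing to damping and must be absorbed entirely by the weight $b$. Making this decomposition work uniformly in $\vec k$ — and in particular showing that $s_v>3/2$, rather than the naive $s_v>d/2$ coming from isotropic Sobolev embedding, really suffices — is the technical heart of the argument and explains the asymmetry between $s_x$ and $s_v$ in the hypotheses.
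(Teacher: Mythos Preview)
Your reduction to a one-dimensional problem along each wave direction $\hat{\vec k}$, the role of the marginal $F_0^{\hat{\vec k}}$, and your explanation of why each of the three thresholds in (\ref{condition-non-existence}) is needed are all correct and match the paper (Lemmas \ref{lemma-proj-inequality} and \ref{lemma-estimate-linear}).

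The gap is in the closing step. A purely homogeneous bootstrap $\|\vec E\|_X\lesssim\varepsilon_0\|\vec E\|_X$ forces $\vec E=0$ only if $\|\vec E\|_X<\infty$ is already known. The hypothesis (\ref{assumption-thm-invariant-2d}) gives $L^\infty_t$ control of $g$ and hence of $\vec E$, but any space-time norm $X$ in which such a bootstrap can close must be of $L^2_t$ (or weighted $L^2_t$) type, since the Penrose mechanism is an $L^2$ bound on the frequency side; finiteness of $\|\vec E\|_X$ in such a norm is precisely the unknown. Your temporal Fourier transform step already presupposes this integrability, so the argument is circular as written.

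The paper avoids this circularity by not working on all of $\mathbf R$ at once. Lemma \ref{lemma-estimate-integral-small} runs Duhamel forward from an arbitrary initial time and obtains an \emph{inhomogeneous} estimate
\[
\bigl\|(1+t)^{s_v-1}\vec E\bigr\|_{L^2_{\{t\ge 0\}}H_x^{3/2+s_x}}\ \le\ C\|f_1(0)\|_{H_x^{s_x}H_v^{s_v,b}}+C\varepsilon_0\bigl\|(1+t)^{s_v-1}\vec E\bigr\|_{L^2_{\{t\ge 0\}}H_x^{3/2+s_x}},
\]
which one closes by first truncating to $[0,T]$ and then sending $T\to\infty$. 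The growing weight $(1+t)^{s_v-1}$ is essential and is what you are missing: by time-translation symmetry one may start the estimate at any $t_0\in\mathbf R$, the right-hand side stays bounded by $C\varepsilon_0$, while on any fixed time window the weight tends to infinity as $t_0\to-\infty$; this forces $\vec E\equiv 0$. Without the initial-data term and the growing time weight, the argument does not conclude.
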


In the above Theorem, the assumption $s_{x}>\frac{d-3}{2}$ is to make
$H_{x}^{s_{x}}$ an algebra which would be needed in the proof of Lemma
\ref{lemma-estimate-integral-small}. The use of weighted Sobolev space
$H_{v}^{s,b}$ in Theorem \ref{thm-non-existence} is rather natural in higher
dimensions. Indeed, even to state the Penrose's stability condition
(\ref{penrose-condition}), we need to assume that the homogeneous equilibrium
$f_{0}\left(  v\right)  \in H^{s_{0},b}\left(  \mathbf{R}^{d}\right)  $ with
$\left(  s_{0},b\right)  \ $satisfying (\ref{condition f_0}). This is because
that linear instability (stability) of homogeneous equilibria of
Vlasov-Poisson is longitudinal along the wave direction of perturbation. The
weighted Sobolev space (\ref{condition f_0}) is needed to ensure that the
projected steady distribution function in any wave direction is in
$H^{s}\left(  \mathbf{R}\right)  $ $\left(  s>\frac{3}{2}\right)  $ which is
necessary to get the $1$D Penrose stability criterion. Moreover, in Theorem
\ref{Thm-existence-weighted} we also construct ($1$D) BGK waves arbitrarily
near any homogeneous equilibrium in $H_{x}^{s_{x}}H_{v}^{s_{v},b}%
(d\geq2,\ b>\frac{d-1}{4})\ $\ for any $s_{x}>0$ and $s_{v}<\frac{3}{2}$.
Combined with Theorem \ref{thm-non-existence}, this shows that for weighted
Sobolev spaces $H_{x}^{s_{x}}H_{v}^{s_{v},b}$, the critical $v-$regularity for
the existence of nontrivial invariant structures near a stable homogeneous
equilibrium is $s_{v}=\frac{3}{2}$. This gives a generalization of the $1$D
results in \cite{lin-zeng-bgk} to higher dimensions. We note that the critical
regularity $s_{v}=\frac{3}{2}$ does not depend on the dimension. This
illustrates again the longitudinal ($1$D) nature of Landau damping, which is
obvious in the linear regime.

We briefly mention some differences of the long time behaviors of
Vlasov-Poisson in $1$D and higher dimensions. For the $1$D case, numerical
simulations (e.g. \cite{deimo-zweifel} \cite{driscoll-et-04}) indicated that
for certain small initial data near a stable homogeneous state including
Maxwellian, there is no decay of electric fields and the asymptotic state is a
BGK wave or superposition of BGK waves. Moreover, BGK waves also appear as the
asymptotic states for the saturation of an unstable homogeneous state
(\cite{amstrong-monto-67}) in $1$D. These suggest that small BGK waves play
important role in understanding the long time behaviors of $1$D Vlasov-Poisson
system. However, for $2$ and $3$D Vlasov-Poisson, numerical simulations
(\cite{morse-nelson69} \cite{oppen-et-numerical-3d}) suggested that when
starting near a homogeneous state, the electric fields decay eventually. Our
Theorems \ref{thm-existence} and \ref{Thm-existence-weighted} on existence of
$1$D BGK waves show that such decay of electric field is not true for general
initial data near homogeneous states. But the numerical simulations seem to
suggest that these $1$D BGK waves do not appear in the long time dynamics in
$2$D and$\ 3$D. To explain these phenomena, it will be interesting to
understand the transversal instability of 1D BGK waves.

This paper is organized as follows. In Section 2, we prove the existence of
$1$D BGK waves in $W^{s,p}\ \left(  s<1+\frac{1}{p}\right)  \ $neighborhoods
of homogeneous states. In Section 3, we use the linear decay estimate to show
that all invariant structures near stable homogeneous equilibria in
$H_{x}^{s_{x}}H_{v}^{s_{v},b}$ spaces satisfying
(\ref{condition-non-existence}) are trivial. Throughout this paper, we use $C$
to denote a generic constant in the estimates and the dependence of $C$ is
indicated only when it matters in the proof.

\section{Existence of BGK waves in $W^{s,p}\ \left(  s<1+\frac{1}{p}\right)
$}

In this Section, we construct nontrivial steady states (BGK waves) near any
homogeneous state in the space $W_{x,v}^{s,p}$ $\left(  s<1+\frac{1}%
{p}\right)  $. We consider $d=2$ only, since the proof is almost the same for
$d=3.\ $The BGK waves we construct are one-dimensional, that is, the steady
distribution $f=f\left(  x_{1},v_{1},v_{2}\right)  $ and the electric field
$\vec{E}=E\left(  x_{1}\right)  \vec{e}_{1}.$ We will show that such a
restriction is necessary by excluding $2D$ and $3D\ $BGK waves.

\begin{proof}
[Proof of Theorem \ref{thm-existence}]We adapt the line of proof in
\cite{lin-zeng-bgk} to construct BGK wave solutions for $2D$ Vlasov-Poisson
equations. First, we modify $f_{0}\left(  v\right)  $ to a smooth function
$f_{1}\left(  v\right)  $ with some additional properties. In the first step,
let $\eta\left(  v\right)  ~(v\in\mathbf{R}^{2})\ $be the standard mollifier
function. For $\delta_{1}>0$ define $f_{\delta_{1}}\left(  v\right)
=\eta_{\delta_{1}}\left(  v\right)  \ast f_{0}\left(  v\right)  $, where
$\eta_{\delta_{1}}\left(  v\right)  =\frac{1}{\delta_{1}^{2}}\eta\left(
\frac{v}{\delta_{1}}\right)  $. Then by the properties of mollifiers, we have%
\[
f_{\delta_{1}}\in C^{\infty}\left(  \mathbf{R}\right)  ,\ f_{\delta_{1}%
}\left(  v\right)  \geq0,\ \int_{\mathbf{R}^{2}}f_{\delta_{1}}\left(
v\right)  dv=1,
\]
and when $\delta_{1}$ is small enough
\[
\left\Vert f_{\delta_{1}}-f_{0}\right\Vert _{L^{1}\left(  \mathbf{R}%
^{2}\right)  }+\int_{\mathbf{R}^{2}}\left\vert v\right\vert ^{2}\left\vert
f_{\delta_{1}}-f_{0}\right\vert \ dv+\left\Vert f_{\delta_{1}}-f_{0}%
\right\Vert _{W^{s,p}\left(  \mathbf{R}^{2}\right)  }\leq\frac{\varepsilon}%
{6}.
\]
Modifying $f_{\delta_{1}}\left(  v\right)  $ near infinity by cut-off, we can
assume in addition that $f_{\delta_{1}}\left(  v\right)  \in H^{2,b}\left(
\mathbf{R}^{2}\right)  $ (defined in (\ref{definition-H-tilde})). In the
second step, let $\sigma\left(  x_{1}\right)  =\sigma\left(  \left\vert
x_{1}\right\vert \right)  $ be the $1$D\ cut-off function. Let $\delta_{2}>0$
be a small number, and define%
\begin{align*}
f_{\delta_{1},\delta_{2}}\left(  v_{1},v_{2}\right)   &  =f_{\delta_{1}%
}\left(  v_{1},v_{2}\right)  \left(  1-\sigma\left(  \frac{v_{1}}{\delta_{2}%
}\right)  \right)  +\left(  \frac{f_{\delta_{1}}\left(  v_{1},v_{2}\right)
+f_{\delta_{1}}\left(  -v_{1},v_{2}\right)  }{2}\right)  \sigma\left(
\frac{v_{1}}{\delta_{2}}\right) \\
&  =f_{\delta_{1}}\left(  v_{1},v_{2}\right)  -\left(  \frac{f_{\delta_{1}%
}\left(  v_{1},v_{2}\right)  -f_{\delta_{1}}\left(  -v_{1},v_{2}\right)  }%
{2}\right)  \sigma\left(  \frac{v_{1}}{\delta_{2}}\right)  .
\end{align*}
Then,
\[
f_{\delta_{1},\delta_{2}}\in C^{\infty}\left(  \mathbf{R}^{2}\right)
,\ f_{\delta_{1},\delta_{2}}\left(  v\right)  >0,\ \int_{\mathbf{R}^{2}%
}f_{\delta_{1},\delta_{2}}\left(  v\right)  dv=\int_{\mathbf{R}^{2}}%
f_{\delta_{1}}\left(  v\right)  dv=1,
\]
and $f_{\delta_{1},\delta_{2}}\left(  v_{1},v_{2}\right)  $ is even in $v_{1}$
when $v_{1}\in\left[  -\delta_{2},\delta_{2}\right]  $. We show that: when
$\delta_{2}$ is small enough
\begin{equation}
\left\Vert f_{\delta_{1},\delta_{2}}-f_{\delta_{1}}\right\Vert _{L^{1}\left(
\mathbf{R}^{2}\right)  }+\int_{\mathbf{R}^{2}}\left\vert v\right\vert
^{2}\left\vert f_{\delta_{1},\delta_{2}}-f_{\delta_{1}}\right\vert
\ dv+\left\Vert f_{\delta_{1},\delta_{2}}-f_{\delta_{1}}\right\Vert
_{W^{s,p}\left(  \mathbf{R}^{2}\right)  }\leq\frac{\varepsilon}{6}.
\label{estimate-delta-2}%
\end{equation}
A minor modification of the proof of Lemma 2.2 in \cite{lin-zeng-bgk} yields
that: when $\delta_{2}\rightarrow0,$%
\[
\left\Vert f_{\delta_{1}}-f_{\delta_{1},\delta_{2}}\right\Vert _{L^{1}\left(
\mathbf{R}^{2}\right)  }+\int_{\mathbf{R}^{2}}\left\vert v\right\vert
^{2}\left\vert f_{\delta_{1}}-f_{\delta_{1},\delta_{2}}\right\vert
\ dv+\ \left\Vert f_{\delta_{1}}-f_{\delta_{1},\delta_{2}}\right\Vert
_{W^{1,p}\left(  \mathbf{R}^{2}\right)  }\rightarrow0.
\]
It remains to show that%
\begin{equation}
\left\Vert \nabla\left(  f_{\delta_{1}}-f_{\delta_{1},\delta_{2}}\right)
\right\Vert _{W^{s-1,p}\left(  \mathbf{R}^{2}\right)  }\rightarrow0\text{,
when }\delta_{2}\rightarrow0\text{.} \label{estimate-gradient-delta-2}%
\end{equation}
We have
\[
\partial_{v_{2}}\left(  f_{\delta_{1}}-f_{\delta_{1},\delta_{2}}\right)
=\left(  \frac{\partial_{v_{2}}f_{\delta_{1}}\left(  v_{1},v_{2}\right)
-\partial_{v_{2}}f_{\delta_{1}}\left(  -v_{1},v_{2}\right)  }{2}\right)
\sigma\left(  \frac{v_{1}}{\delta_{2}}\right)  ,
\]
and%
\begin{align*}
\partial_{v_{1}}\left(  f_{\delta_{1}}-f_{\delta_{1},\delta_{2}}\right)   &
=\left(  \frac{\partial_{v_{1}}f_{\delta_{1}}\left(  v_{1},v_{2}\right)
+\partial_{v_{1}}f_{\delta_{1}}\left(  -v_{1},v_{2}\right)  }{2}\right)
\sigma\left(  \frac{v_{1}}{\delta_{2}}\right) \\
&  +\sigma^{\prime}\left(  \frac{v_{1}}{\delta_{2}}\right)  \frac{v_{1}%
}{\delta_{2}}\frac{f_{\delta_{1}}\left(  v_{1},v_{2}\right)  -f_{\delta_{1}%
}\left(  -v_{1},v_{2}\right)  }{2v_{1}}.
\end{align*}
By a scaling argument as in the proof of Lemma 2.2 of \cite{lin-zeng-bgk},
\[
\left\Vert \frac{f_{\delta_{1}}\left(  v_{1},v_{2}\right)  -f_{\delta_{1}%
}\left(  -v_{1},v_{2}\right)  }{2v_{1}}\right\Vert _{W^{s-1,p}\left(
\mathbf{R}^{2}\right)  }\leq C\left\Vert f_{\delta_{1}}\right\Vert
_{W^{s,p}\left(  \mathbf{R}^{2}\right)  }.
\]
So (\ref{estimate-gradient-delta-2}) follows from Lemma \ref{lemma-delta-0}
below. Thus for fixed $\varepsilon>0,\ $by choosing $\delta_{1},\delta_{2}$
small enough, we get
\[
\left\Vert f_{\delta_{1},\delta_{2}}-f_{0}\right\Vert _{L^{1}\left(
\mathbf{R}^{2}\right)  }+\int_{\mathbf{R}^{2}}\left\vert v\right\vert
^{2}\left\vert f_{\delta_{1},\delta_{2}}-f_{0}\right\vert \ dv+\left\Vert
f_{\delta_{1},\delta_{2}}-f_{0}\right\Vert _{W^{s,p}\left(  \mathbf{R}%
^{2}\right)  }\leq\frac{\varepsilon}{3}.
\]
We set $f_{1}\left(  v_{1},v_{2}\right)  =f_{\delta_{1},\delta_{2}}\left(
v_{1},v_{2}\right)  $, then
\[
f_{1}\left(  v\right)  >0,\ \ f_{1}\left(  v\right)  \in C^{\infty}\left(
\mathbf{R}^{2}\right)  \cap\tilde{H}^{2}\left(  \mathbf{R}^{2}\right)
,\ \int_{\mathbf{R}^{2}}f_{1}\left(  v\right)  dv=1,
\]
$f_{1}\left(  v\right)  $ is even for $v_{1}$ in $\left[  -\delta_{2}%
,\delta_{2}\right]  $ and within $\frac{\varepsilon}{3}$ distance of
$f_{0}\left(  v\right)  $ in the norm of (\ref{norm-thm-stability}). Below, we
denote $a=\delta_{2}/2.$

Fix the $x_{1}-$period $T_{1}>0$, we only consider the travel speed $c=0$
since the construction for any $c\in\mathbf{R}$ follows by the Galilean
transform as in \cite{lin-zeng-bgk}. Our strategy is to construct BGK wave
solutions of the form $\left(  f_{\varepsilon}\left(  x_{1},v_{1}%
,v_{2}\right)  ,E_{\varepsilon}\left(  x_{1}\right)  \vec{e}_{1}\right)  \ $by
bifurcation at a modified homogeneous profile near $f_{1}\left(  v_{1}%
,v_{2}\right)  $. Denote $\sigma\left(  x\right)  =\sigma\left(  \left\vert
x\right\vert \right)  $ to be the cut-off function such that$\ \sigma\left(
x\right)  \in C_{0}^{\infty}\left(  \mathbf{R}\right)  ,$
\begin{equation}
\ 0\leq\sigma\left(  x\right)  \leq1;\ \sigma\left(  x\right)  =1\text{ when
}\left\vert x\right\vert \leq1\text{; }\sigma\left(  x\right)  =0\text{ when
}\left\vert x\right\vert \geq2\text{.} \label{cut-off}%
\end{equation}
Similar to Lemma 2.1 in \cite{lin-zeng-bgk}, there exists $g_{0}\left(
x_{1},x_{2}\right)  \in C^{\infty}\left(  \mathbf{R}^{2}\right)  ,\ g_{0}=0$
when $\left\vert x_{1}\right\vert \geq4a^{2},$ such that
\[
f_{1}\left(  v_{1},v_{2}\right)  \sigma\left(  \frac{v_{1}}{a}\right)
=g_{0}\left(  v_{1}^{2},v_{2}\right)  .
\]
Define $g_{+}\left(  x_{1},x_{2}\right)  ,\ g_{-}\left(  x_{1},x_{2}\right)
\in C^{\infty}\left(  \mathbf{R}^{2}\right)  $ by
\[
g_{\pm}\left(  x_{1},x_{2}\right)  =\left\{
\begin{array}
[c]{cc}%
f_{1}\left(  \pm\sqrt{x_{1}},x_{2}\right)  \left(  1-\sigma\left(  \frac
{\sqrt{x_{1}}}{a}\right)  \right)  +g_{0}\left(  x_{1},x_{2}\right)  &
\text{if }x_{1}>a^{2}\\
g_{0}\left(  x_{1},x_{2}\right)  & \text{if }-4a^{2}<x_{1}\leq a^{2}\\
0 & \text{if }x_{1}\leq-4a^{2}%
\end{array}
.\right.
\]
Then%
\[
f_{1}\left(  v_{1},v_{2}\right)  =\left\{
\begin{array}
[c]{cc}%
g_{+}\left(  v_{1}^{2},v_{2}\right)  & \text{if }v_{1}>0\\
g_{-}\left(  v_{1}^{2},v_{2}\right)  & \text{if }v_{1}\leq0
\end{array}
\right.  .
\]
Since $\partial_{v_{1}}f_{1}\left(  0,v_{2}\right)  =0$, $\ f_{1}\in
C^{\infty}\left(  \mathbf{R}^{2}\right)  \cap H^{2,b}\left(  \mathbf{R}%
^{2}\right)  $, we have
\[
\left\vert \int_{\mathbf{R}^{2}}\frac{\partial_{v_{1}}f_{1}\left(  v\right)
}{v_{1}}dv\right\vert <\infty.
\]
Indeed, let $\bar{f}_{1}\left(  v_{1}\right)  =\int_{\mathbf{R}}f_{1}\left(
v_{1},v_{2}\right)  \ dv_{2}$, then since $\bar{f}_{1}^{\prime}\left(
0\right)  =0$,\ by Corollary \ref{cor-inequality},
\[
\left\vert \int_{\mathbf{R}^{2}}\frac{\partial_{v_{1}}f_{1}\left(  v\right)
}{v_{1}}dv\right\vert =\left\vert \int_{\mathbf{R}}\frac{\bar{f}_{1}^{\prime
}\left(  v_{1}\right)  }{v_{1}}dv_{1}\right\vert \leq C\left\Vert
f_{1}\right\Vert _{H^{2,b}\left(  \mathbf{R}^{2}\right)  }\text{ .}%
\]
We consider three cases.

Case 1: $\int_{\mathbf{R}^{2}}\frac{\partial_{v_{1}}f_{1}\left(  v\right)
}{v_{1}}dv<\left(  \frac{2\pi}{T_{1}}\right)  ^{2}.\ \ $Let
\[
F_{1}\left(  v_{1}\right)  =\exp\left(  -\frac{\left(  v_{1}-v_{0}\right)
^{2}}{2}\right)  +\exp\left(  -\frac{\left(  v_{1}+v_{0}\right)  ^{2}}%
{2}\right)  =G_{1}\left(  v_{1}^{2}\right)  ,~\
\]
and $F_{2}\left(  v_{2}\right)  =e^{-\frac{1}{2}v_{2}^{2}},\ $where $v_{0}$ is
a large positive constant such that
\[
\int_{\mathbf{R}}\frac{F_{1}^{\prime}\left(  v_{1}\right)  }{v_{1}}dv_{1}>0.
\]
Let $\gamma,\delta>0$ be two small parameters to be fixed, define
\begin{equation}
f_{\gamma,\delta}\left(  v_{1},v_{2}\right)  =\frac{1}{1+C_{0}\gamma^{2}%
}\left[  f_{1}\left(  v_{1},v_{2}\right)  +\frac{\gamma}{\delta}F_{1}\left(
\frac{v_{1}}{\gamma\delta}\right)  F_{2}\left(  v_{2}\right)  \right]  ,
\label{defn-f-gamma-delta}%
\end{equation}
where $C_{0}=\int F_{1}\left(  v_{1}\right)  F_{2}\left(  v_{2}\right)  dv>0$.
The rest of the proof is similar to the proof of Proposition 2.1 in
\cite{lin-zeng-bgk}. We sketch it below. There exists $0<\delta_{1}<\delta
_{2}$ such that for $\gamma_{0}>0$ small enough
\begin{equation}
0<\int_{\mathbf{R}^{2}}\frac{\partial_{v_{1}}f_{\gamma,\delta_{2}}\left(
v_{1},v_{2}\right)  }{v_{1}}dv<\left(  \frac{2\pi}{T_{1}}\right)  ^{2}%
<\int_{\mathbf{R}^{2}}\frac{\partial_{v_{1}}f_{\gamma,\delta_{1}}\left(
v_{1},v_{2}\right)  }{v_{1}}dv,\text{ when }0<\gamma<\gamma_{0}\text{.}
\label{inequality-period}%
\end{equation}
Let $\beta\left(  x_{1}\right)  $ be a $T_{1}$ periodic function and denote
$e=\frac{1}{2}v_{1}^{2}-\beta\left(  x_{1}\right)  $.\ We look for $1$D BGK
wave solution
\[
f^{0}=f_{\gamma,\delta}^{\beta}\left(  x_{1},v\right)  ,\ \ \vec{E}^{0}%
=E^{0}\left(  x_{1}\right)  \vec{e}_{1}%
\]
near $\left(  f_{\gamma,\delta},0\right)  $, where
\begin{equation}
f_{\gamma,\delta}^{\beta}\left(  x_{1},v\right)  =\left\{
\begin{array}
[c]{cc}%
\frac{1}{1+C_{0}\gamma^{2}}\left[  g_{+}\left(  2e,v_{2}\right)  +\frac
{\gamma}{\delta}G_{1}\left(  \frac{2e}{\left(  \gamma\delta\right)  ^{2}%
}\right)  F_{2}\left(  v_{2}\right)  \right]  & \text{if }v_{1}>0\\
\frac{1}{1+C_{0}\gamma^{2}}\left[  g_{-}\left(  2e,v_{2}\right)  +\frac
{\gamma}{\delta}G_{1}\left(  \frac{2e}{\left(  \gamma\delta\right)  ^{2}%
}\right)  F_{2}\left(  v_{2}\right)  \right]  & \text{if }v_{1}\leq0
\end{array}
\right.  \label{defn-f-steady}%
\end{equation}
and $E^{0}\left(  x_{1}\right)  =-\beta^{\prime}\left(  x_{1}\right)  $. The
steady Vlasov-Poisson equation is reduced to the ODE
\begin{align}
\beta^{\prime\prime}  &  =\int_{\mathbf{R}^{2}}f_{\gamma,\delta}^{\beta
}\left(  x,v\right)  \ dv-1\label{ode-beta}\\
&  =\frac{1}{1+C_{0}\gamma^{2}}\left[  \int_{v_{1}>0}g_{+}\left(
2e,v_{2}\right)  dv+\int_{v_{1}\leq0}g_{-}\left(  2e,v_{2}\right)
dv+\int_{\mathbf{R}^{2}}\frac{\gamma}{\delta}G_{1}\left(  \frac{2e}{\left(
\gamma\delta\right)  ^{2}}\right)  F_{2}\left(  v_{2}\right)  dv\right]
-1\nonumber\\
&  :=h_{\gamma,\delta}\left(  \beta\right)  .\nonumber
\end{align}
Since
\[
h_{\gamma,\delta}\left(  0\right)  =\int_{\mathbf{R}^{d}}f_{\gamma,\delta
}\left(  v\right)  \ dv-1=0
\]
and
\begin{align*}
&  \ \ \ \ \ \ h_{\gamma,\delta}^{\prime}\left(  0\right) \\
&  =\frac{-2}{1+C_{0}\gamma^{2}}\left\{  \int_{v_{1}>0}\partial_{1}%
g_{+}\left(  v_{1}^{2},v_{2}\right)  dv+\int_{v_{1}\leq0}\partial_{1}%
g_{-}\left(  v_{1}^{2},v_{2}\right)  dv+\int_{\mathbf{R}^{2}}\frac{\gamma
}{\delta}\frac{1}{\left(  \gamma\delta\right)  ^{2}}G^{\prime}\left(
\frac{v_{1}^{2}}{\left(  \gamma\delta\right)  ^{2}}\right)  F_{2}\left(
v_{2}\right)  dv\right\} \\
&  =-\int_{\mathbf{R}^{2}}\frac{\partial_{v_{1}}f_{\gamma,\delta}\left(
v_{1},v_{2}\right)  }{v_{1}}dv<0,\text{ when }0<\gamma<\gamma_{0},\ \delta
_{1}<\delta<\delta_{2},
\end{align*}
so $\beta=0$ is a center for the ODE (\ref{ode-beta}) and there exist
bifurcation of periodic solutions. More precisely, for any fixed $\gamma
\in\left(  0,\gamma_{0}\right)  ,\ $there exists $r_{0}>0$ (independent of
$\delta\in\left(  \delta_{1},\delta_{2}\right)  $)$\,$, such that for each
$0<r<r_{0}\,$, there exists a $T\left(  \gamma,\delta;r\right)  -$periodic
solution $\beta_{\gamma,\delta;r}$ to the ODE (\ref{ode-beta}) with
$\left\Vert \beta_{\gamma,\delta;r}\right\Vert _{H^{2}\left(  0,T\left(
\gamma,\delta;r\right)  \right)  }=r$. Moreover,
\[
\left(  \frac{2\pi}{T\left(  \gamma,\delta;r\right)  }\right)  ^{2}%
\rightarrow\int_{\mathbf{R}^{2}}\frac{\partial_{v_{1}}f_{\gamma,\delta}\left(
v_{1},v_{2}\right)  }{v_{1}}dv\text{, when }r\rightarrow0.
\]
To get a solution with the given period $T_{1},$ we adjust $\delta\in\left[
\delta_{1},\delta_{2}\right]  \ \ $by using the inequality
(\ref{inequality-period}) and the fact that $T\left(  \gamma,\delta;r\right)
$ is continuous to $\delta$. So for each $\gamma,r>0$ small enough, there
exists $\delta_{T_{1}}\left(  \gamma,r\right)  \in\left(  \delta_{1}%
,\delta_{2}\right)  \,$, such that $T\left(  \gamma,\delta_{T_{1}};r\right)
=T_{1}$. Define $f_{\gamma,r}\left(  x_{1},v\right)  =f_{\gamma,\delta_{T1}%
}^{\beta}\left(  x,v\right)  $,$\ \beta_{\gamma,r}\left(  x_{1}\right)
=\beta_{\gamma,\delta_{T1};r}$ and let $\vec{E}_{\gamma,r}\left(
x_{1}\right)  =-\beta_{\gamma,r}^{\prime}\left(  x_{1}\right)  \vec{e}_{1}$.
Then $\left(  f_{\gamma,r}\left(  x_{1},v\right)  ,\vec{E}_{\gamma,r}\left(
x_{1}\right)  \right)  $ is a nontrivial steady solution to (\ref{vpe}) with
$x_{1}-$period $T_{1}$. For any fixed $\gamma>0$, let
\[
\delta\left(  \gamma\right)  =\lim_{r\rightarrow0}\delta_{T_{1}}\left(
\gamma,r\right)  \in\left[  \delta_{1},\delta_{2}\right]  .
\]
By the dominant convergence theorem, it is easy to show that
\[
\left\Vert f_{\gamma,r}\left(  x_{1},v\right)  -f_{\gamma,\delta\left(
\gamma\right)  }\left(  v\right)  \right\Vert _{L_{x_{1},v}^{1}}+\ \int
_{0}^{T_{1}}\int_{\mathbf{R}^{2}}\left\vert v\right\vert ^{2}\left\vert
f_{\gamma,r}\left(  x_{1},v\right)  -f_{\gamma,\delta\left(  \gamma\right)
}\left(  v\right)  \right\vert \ dx_{1}dv\ \ \ \
\]%
\[
\ +\left\Vert f_{\gamma,r}\left(  x,v\right)  -f_{\gamma,\delta\left(
\gamma\right)  }\left(  v\right)  \right\Vert _{W_{x_{1},v}^{2,p}}%
\rightarrow0,\ \ \ \
\]
when $r=\left\Vert \beta_{\gamma,r}\left(  x_{1}\right)  \right\Vert
_{H^{2}\left(  0,T_{1}\right)  }\rightarrow0.\ $So for any $\gamma>0$ and
$\varepsilon>0$, there exists $r=r\left(  \gamma,\varepsilon\right)  >0$ such
that
\[
\left\Vert f_{\gamma,r}\left(  x_{1},v\right)  -f_{\gamma,\delta\left(
\gamma\right)  }\left(  v\right)  \right\Vert _{L_{x_{1},v}^{1}}+\ \int
_{0}^{T_{1}}\int_{\mathbf{R}^{2}}\left\vert v\right\vert ^{2}\left\vert
f_{\gamma,r}\left(  x_{1},v\right)  -f_{\gamma,\delta\left(  \gamma\right)
}\left(  v\right)  \right\vert \ dx_{1}dv\ \
\]%
\[
\ +\left\Vert f_{\gamma,r}\left(  x,v\right)  -f_{\gamma,\delta\left(
\gamma\right)  }\left(  v\right)  \right\Vert _{W_{x_{1},v}^{2,p}}%
<\frac{\varepsilon}{3}.
\]
Since%
\[
f_{1}\left(  v\right)  -f_{\gamma,\delta\left(  \gamma\right)  }\left(
v\right)  =\frac{1}{1+C_{0}\gamma^{2}}\left[  -C_{0}\gamma^{2}f_{1}\left(
v\right)  -\frac{\gamma}{\delta}F_{1}\left(  \frac{v_{1}}{\gamma\delta
}\right)  F_{2}\left(  v_{2}\right)  \right]  .
\]
and $\delta\left(  \gamma\right)  \in\left[  \delta_{1},\delta_{2}\right]
$,\ by using Lemma \ref{lemma-delta-0}, for $s<1+\frac{1}{p},$
\[
\left\Vert f_{1}\left(  v\right)  -f_{\gamma,\delta\left(  \gamma\right)
}\left(  v\right)  \right\Vert _{W^{s,p}\left(  \mathbf{R}^{2}\right)
}\rightarrow0,\ \text{\ when }\gamma\rightarrow0.
\]
It is also easy to show that
\[
\left\Vert f_{1}\left(  v\right)  -f_{\gamma,\delta\left(  \gamma\right)
}\left(  v\right)  \right\Vert _{L^{1}}+\ \int_{\mathbf{R}^{2}}\left\vert
v\right\vert ^{2}\left\vert f_{1}\left(  v\right)  -f_{\gamma,\delta\left(
\gamma\right)  }\left(  v\right)  \right\vert \ dv\rightarrow0,\ \ \text{when
}\gamma\rightarrow0.
\]
Thus we can choose $\gamma>0$ small enough such that
\[
T_{1}\left\Vert f_{1}\left(  v\right)  -f_{\gamma,\delta\left(  \gamma\right)
}\left(  v\right)  \right\Vert _{L^{1}}+\ T_{1}\int_{\mathbf{R}^{2}}\left\vert
v\right\vert ^{2}\left\vert f_{1}\left(  v\right)  -f_{\gamma,\delta\left(
\gamma\right)  }\left(  v\right)  \right\vert dv\ \
\]%
\[
\ +\left\Vert f_{1}\left(  v\right)  -f_{\gamma,\delta\left(  \gamma\right)
}\left(  v\right)  \right\Vert _{W^{s,p}\left(  \mathbf{R}^{2}\right)  }%
<\frac{\varepsilon}{3}.
\]
So the nontrivial steady solution $\left(  f_{\gamma,r}\left(  x_{1},v\right)
,\vec{E}_{\gamma,r}\left(  x_{1}\right)  \right)  $ is within $\varepsilon$
distance of the homogeneous state $\left(  f_{0}\left(  v\right)  ,\vec
{0}\right)  $ in the norm of (\ref{norm-thm-stability}).

Case 2: $\int_{\mathbf{R}^{2}}\frac{\partial_{v_{1}}f_{1}\left(  v\right)
}{v_{1}}dv<\left(  \frac{2\pi}{T_{1}}\right)  ^{2}$. Choose $F_{1}\left(
v_{1}\right)  =\exp\left(  -\frac{v_{1}^{2}}{2}\right)  $ and $F_{2}\left(
v_{2}\right)  $ is the same as before. Define $f_{\gamma,\delta}\left(
v\right)  $ as in Case 1 (see (\ref{defn-f-gamma-delta})). Then there exists
$0<\delta_{1}<\delta_{2}$ such that%
\[
0<\int_{\mathbf{R}^{2}}\frac{\partial_{v_{1}}f_{\gamma,\delta_{1}}\left(
v_{1},v_{2}\right)  }{v_{1}}dv<\left(  \frac{2\pi}{T_{1}}\right)  ^{2}%
<\int_{\mathbf{R}^{2}}\frac{\partial_{v_{1}}f_{\gamma,\delta_{2}}\left(
v_{1},v_{2}\right)  }{v_{1}}dv.
\]
The rest of the proof is the same as in Case 1.

Case 3: $\int_{\mathbf{R}^{2}}\frac{\partial_{v_{1}}f_{1}\left(  v\right)
}{v_{1}}dv=\left(  \frac{2\pi}{T_{1}}\right)  ^{2}$. For $\delta>0,\ $define
$f_{\delta}\left(  v_{1},v_{2}\right)  =\frac{1}{\delta}f_{1}\left(
\frac{v_{1}}{\delta},v_{2}\right)  .$ For any $\varepsilon>0$ , there exist
$0<\delta_{1}\left(  \varepsilon\right)  <1<\delta_{2}\left(  \varepsilon
\right)  $ such that
\[
0<\int_{\mathbf{R}^{2}}\frac{\partial_{v_{1}}f_{\delta_{2}}\left(  v\right)
}{v_{1}}dv<\left(  \frac{2\pi}{T_{1}}\right)  ^{2}<\int_{\mathbf{R}^{2}}%
\frac{\partial_{v_{1}}f_{\delta_{1}}\left(  v\right)  }{v}dv,
\]
and when $\delta\in\left(  \delta_{1}\left(  \varepsilon\right)  ,\delta
_{2}\left(  \varepsilon\right)  \right)  ,$
\[
T_{1}\left\Vert f_{1}\left(  v\right)  -f_{\delta}\left(  v\right)
\right\Vert _{L^{1}\left(  \mathbf{R}^{2}\right)  }+\ T_{1}\int_{\mathbf{R}%
^{2}}\left\vert v\right\vert ^{2}\left\vert f_{1}\left(  v\right)  -f_{\delta
}\left(  v\right)  \right\vert dv\ \
\]%
\[
\ +\left\Vert f_{1}\left(  v\right)  -f_{\delta}\left(  v\right)  \right\Vert
_{W^{s,p}\left(  \mathbf{R}^{2}\right)  }<\frac{\varepsilon}{3}.
\]
$\ $We construct steady BGK waves near $\left(  f_{\delta}\left(  v\right)
,\vec{0}\right)  $, which are of the form
\begin{equation}
f_{\delta}^{\beta}\left(  x_{1},v\right)  =\left\{
\begin{array}
[c]{cc}%
\frac{1}{\delta}g_{+}\left(  \frac{2e}{\delta^{2}},v_{2}\right)   & \text{if
}v_{1}>0\\
\frac{1}{\delta}g_{-}\left(  \frac{2e}{\delta^{2}},v_{2}\right)   & \text{if
}v_{1}\leq0
\end{array}
\right.  ,\text{ }e=\frac{1}{2}v_{1}^{2}-\beta\left(  x_{1}\right)
,\label{defn-f-steady-case3}%
\end{equation}
and $\vec{E}^{0}=-\beta^{\prime}\left(  x_{1}\right)  \vec{e}_{1}$. The
existence of BGK waves is then reduced to solve the ODE
\begin{equation}
\beta^{\prime\prime}=\int_{\mathbf{R}^{2}}f_{\delta}^{\beta}\left(
x_{1},v\right)  \ dv-1:=h_{\delta}\left(  \beta\right)
.\label{ode-beta-case3}%
\end{equation}
As in Case 1, for any $\delta\in\left(  \delta_{1}\left(  \varepsilon\right)
,\delta_{2}\left(  \varepsilon\right)  \right)  ,\ \exists\ r_{0}\left(
\varepsilon\right)  >0$ (independent of $\delta$) such that for each
$0<r<r_{0}\,$, there exists a $T\left(  \delta;r\right)  -$periodic solution
$\beta_{\delta;r}$ to the ODE $(\ref{ode-beta-case3})$, satisfying $\left\Vert
\beta_{\delta;r}\right\Vert _{H^{2}\left(  0,T\left(  \delta;r\right)
\right)  }=r$ and
\[
\left(  \frac{2\pi}{T\left(  \delta;r\right)  }\right)  ^{2}\rightarrow
\int_{\mathbf{R}^{2}}\frac{\partial_{v_{1}}f_{\delta}\left(  v\right)  }%
{v_{1}}dv\text{, when }r\rightarrow0.
\]
For $r$ small enough, again there exists $\delta_{T_{1}}\left(  r,\varepsilon
\right)  \in\left(  \delta_{1}\left(  \varepsilon\right)  ,\delta_{2}\left(
\varepsilon\right)  \right)  $ such that $T\left(  \delta_{T_{1}};r\right)
=T_{1}$. Define $f_{r,\varepsilon}\left(  x_{1},v\right)  =f_{\delta_{T_{1}}%
}^{\beta}\left(  x_{1},v\right)  $ and $\vec{E}_{r,\varepsilon}\left(
x\right)  =-\beta_{\delta_{T1};r}^{\prime}\left(  x_{1}\right)  \vec{e}_{1}$.
Then $\left(  f_{r,\varepsilon}\left(  x_{1},v\right)  ,\vec{E}_{r,\varepsilon
}\left(  x\right)  \right)  $ is a nontrivial steady solution to ($\ref{vpe})$
with $x_{1}-$period $T_{1}$. As in Cases 1 and 2, by choosing $r$ small
enough, $f_{r,\varepsilon}\left(  x_{1},v\right)  $ is within $\varepsilon$
distance of the homogeneous state $\left(  f_{0}\left(  v\right)  ,0\right)  $
in the norm of (\ref{norm-thm-stability}). This finishes the proof of the
Theorem \ref{thm-existence}.
\end{proof}

\begin{lemma}
\label{lemma-delta-0}

(i) Given $f\in W^{\frac{1}{p},p}\left(  \mathbf{R}\right)  \cap L^{\infty
}\left(  \mathbf{R}\right)  ,$and$\ $%
\[
g\in W^{s,p}\left(  \mathbf{R}^{2}\right)  \ \left(  p>1,0\leq s<\frac{1}%
{p}\right)  .
\]
Then for $\delta>0,\ $
\begin{equation}
\left\Vert f\left(  \frac{v_{1}}{\delta}\right)  g\left(  v_{1},v_{2}\right)
\right\Vert _{W^{s,p}\left(  \mathbf{R}^{2}\right)  }\rightarrow0\text{, when
}\delta\rightarrow0\text{. } \label{zero-lemma}%
\end{equation}

(ii) Given $f,g\in W^{s,p}\left(  \mathbf{R}\right)  $ $\left(  p>1,0\leq
s<\frac{1}{p}\right)  $.~Then for $\delta>0,\ $
\[
\left\Vert f\left(  \frac{v_{1}}{\delta}\right)  g\left(  v_{2}\right)
\right\Vert _{W^{s,p}\left(  \mathbf{R}^{2}\right)  }\rightarrow0\text{, when
}\delta\rightarrow0\text{. }%
\]

\end{lemma}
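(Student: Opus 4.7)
The plan is to work throughout with the Gagliardo--Slobodeckij characterization
\[
\|F\|_{W^{s,p}(\mathbb{R}^d)}^p \simeq \|F\|_{L^p}^p + \iint \frac{|F(x)-F(y)|^p}{|x-y|^{d+sp}}\,dx\,dy
\]
(valid for $0<s<1$; the $s=0$ case is pure $L^p$), together with the $1$D scaling identities
\[
\|f(\cdot/\delta)\|_{L^p(\mathbb{R})}^p = \delta\,\|f\|_{L^p}^p,\qquad [f(\cdot/\delta)]_{\dot W^{s,p}(\mathbb{R})}^p = \delta^{1-sp}\,[f]_{\dot W^{s,p}}^p,
\]
both of whose exponents are strictly positive because $s<1/p$. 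Thus scaling alone will drive every purely $1$D-factor contribution to zero.

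For part (ii) I would substitute $F(v)=f(v_1/\delta)g(v_2)$ and split the Slobodeckij difference via $ab-cd=(a-c)b+c(b-d)$. In each piece one factor is constant along the ``silent'' variable, so the Fubini identity $\int_{\mathbb{R}}(a^2+t^2)^{-(2+sp)/2}dt = C|a|^{-1-sp}$ collapses the 2D Gagliardo kernel to a 1D one, yielding
\[
[F]_{\dot W^{s,p}(\mathbb{R}^2)}^p \lesssim \delta^{1-sp}\|g\|_{L^p}^p\,[f]_{\dot W^{s,p}(\mathbb{R})}^p + \delta\,\|f\|_{L^p}^p\,[g]_{\dot W^{s,p}(\mathbb{R})}^p.
\]
Combined with $\|F\|_{L^p(\mathbb{R}^2)}^p=\delta\|f\|_{L^p}^p\|g\|_{L^p}^p$, every term carries a positive power of $\delta$ and vanishes as $\delta\to 0$.

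For part (i) I would first handle $g\in C_c^\infty(\mathbb{R}^2)$. The analogous splitting and $1$D collapse give
\[
[F]_{\dot W^{s,p}(\mathbb{R}^2)}^p \lesssim \delta^{1-sp}\!\int I_f(u)\,\|g(\delta u,\cdot)\|_{L^p}^p\,du \;+\; \delta\!\int |f(u)|^p\,\bar G_g(\delta u)\,du,
\]
where $I_f(u)=\int|f(u+w)-f(u)|^p|w|^{-1-sp}dw$ is the 1D Gagliardo kernel of $f$ (so $\|I_f\|_{L^1}=[f]_{\dot W^{s,p}(\mathbb{R})}^p$), and $\bar G_g(v_1)=\int dv_2\iint|g(v+w)-g(v)|^p|w|^{-2-sp}dw$ is the horizontal-slice Gagliardo energy of $g$ (bounded and continuous because $g\in C_c^\infty$). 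In each integral dominated convergence sends the inner integral to a finite limit ($\|g(0,\cdot)\|_{L^p}^p\,[f]_{\dot W^{s,p}}^p$ and $\bar G_g(0)\|f\|_{L^p}^p$, respectively), while the prefactors $\delta^{1-sp}$ and $\delta$ drive the whole expression to zero; the $L^p$ piece is handled similarly.

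For general $g\in W^{s,p}(\mathbb{R}^2)$ I would approximate by $g_\eta\in C_c^\infty$ with $\|g-g_\eta\|_{W^{s,p}}<\eta$ and combine the smooth case with a uniform-in-$\delta$ multiplier estimate
\[
\|f(\cdot/\delta)h\|_{W^{s,p}(\mathbb{R}^2)} \leq C\bigl(\|f\|_{L^\infty}+\|f\|_{\dot W^{1/p,p}(\mathbb{R})}\bigr)\,\|h\|_{W^{s,p}(\mathbb{R}^2)},
\]
whose constant is $\delta$-independent precisely because both $L^\infty$ and $\dot W^{1/p,p}(\mathbb{R})$ are scale-invariant under $f\mapsto f(\cdot/\delta)$. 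An $\varepsilon/2$-argument---first fix $\eta$, then send $\delta\to 0$---then closes the proof. I expect the main obstacle to be establishing this uniform multiplier estimate: one piece of the Gagliardo splitting produces $\|f\|_{L^\infty}^p[h]^p$ for free, but the other requires bounding $\int\|h(v_1,\cdot)\|_{L^p}^p\,\delta^{-sp}I_f(v_1/\delta)\,dv_1$, for which one must combine the $1$D Sobolev embedding $W^{s,p}(\mathbb{R})\hookrightarrow L^{p/(1-sp)}(\mathbb{R})$ (to control the $h$-factor) with a fractional Hardy-type bound $\|I_f\|_{L^{1/(sp)}}\lesssim\|f\|_{\dot W^{1/p,p}}^p$ (to control the $f$-factor)---a H\"older balance that holds exactly when $s<1/p$.
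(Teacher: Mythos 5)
Your architecture runs parallel to the paper's: settle the smooth case by scaling (both exponents $\delta^{1}$ and $\delta^{1-sp}$ are positive precisely because $s<1/p$), then pass to general $g$ by density plus a $\delta$-uniform multiplier bound. Part (ii) and the $C_c^\infty$ case of part (i) are correct as written, and your explicit Gagliardo--Slobodeckij computation with the kernel collapse $\int_{\mathbf{R}}(a^2+t^2)^{-(2+sp)/2}dt=C|a|^{-1-sp}$ is a clean, self-contained substitute for the paper's appeal to the Fubini theorem for $W^{s,p}$ norms (which it quotes from Strichartz). One convention point you should address explicitly: the paper defines $W^{s,p}$ as the complex interpolation space between $L^p$ and $W^{m,p}$, i.e.\ a Bessel-potential space, which for $p\neq2$ is not the Slobodeckij space $B^s_{p,p}$ that your double-integral seminorm computes; since the regularity window $s<1/p$ is open, the two scales interleave with an arbitrarily small loss of $s$, but a sentence is needed to transfer your conclusion to the paper's norm.

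The genuine gap is the $\delta$-uniform multiplier estimate $\|f(\cdot/\delta)h\|_{W^{s,p}}\leq C\bigl(\|f\|_{L^\infty}+\|f\|_{\dot W^{1/p,p}}\bigr)\|h\|_{W^{s,p}}$, which you correctly identify as the crux but do not establish, and whose sketched proof does not close. Your route needs the Hardy-type bound $\|I_f\|_{L^{1/(sp)}}\lesssim\|f\|^p_{\dot W^{1/p,p}\cap L^\infty}$, and the natural attempt fails at exactly the critical exponent: by Minkowski, $\|I_f\|_{L^{1/(sp)}}\leq\int\|f(\cdot+w)-f\|_{L^{1/s}}^{p}|w|^{-1-sp}dw$; interpolating $\|f(\cdot+w)-f\|_{L^{1/s}}\leq\omega_p(w)^{sp}\,\|2f\|_{L^\infty}^{1-sp}$ with $\omega_p(w)=\|f(\cdot+w)-f\|_{L^p}$ and then applying H\"older with exponents $1/(sp)$ and $1/(1-sp)$ against $\int\omega_p(w)^p|w|^{-2}dw=[f]_{\dot W^{1/p,p}}^p$ leaves the factor $\bigl(\int|w|^{(sp-1)/(1-sp)}dw\bigr)^{1-sp}=\bigl(\int|w|^{-1}dw\bigr)^{1-sp}=\infty$; even the stronger assumption $\omega_p(w)\lesssim|w|^{1/p}$ produces the same logarithmic divergence. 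So this step requires a genuinely different argument. The paper closes exactly this point by quoting the estimate from the proof of Theorem 3.2 of Strichartz: for $s<1/p$, $\|h_1h_2\|_{W^{s,p}}\leq C\|h_1\|_{W^{s,p}}(\|h_2\|_{W^{1/p,p}}+\|h_2\|_{L^\infty})$, whose right-hand side is scale-invariant in $h_2=f(\cdot/\delta)$, as you observe. Citing that result (rather than reproving it) closes your density step and completes the proof.
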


\begin{proof}
Proof of (i): First we consider $g\in C_{0}^{\infty}\left(  \mathbf{R}%
^{2}\right)  $. By Fubini Theorem for $W^{s,p}\left(  \mathbf{R}^{2}\right)  $
norm (see \cite{strichartz}), we have
\begin{align*}
&  \left\Vert f\left(  \frac{v_{1}}{\delta}\right)  g\left(  v_{1}%
,v_{2}\right)  \right\Vert _{W^{s,p}\left(  \mathbf{R}^{2}\right)  }\\
&  \leq C\left(  \left\Vert \left\Vert f\left(  \frac{v_{1}}{\delta}\right)
g\left(  v_{1},v_{2}\right)  \right\Vert _{W_{v_{1}}^{s,p}\left(
\mathbf{R}\right)  }\right\Vert _{L_{v_{2}}^{p}}+\left\Vert f\left(
\frac{v_{1}}{\delta}\right)  \left\Vert g\left(  v_{1},v_{2}\right)
\right\Vert _{W_{v_{2}}^{s,p}\left(  \mathbf{R}\right)  }\right\Vert
_{L_{v_{1}}^{p}}\right)  .
\end{align*}
By the estimates in the proof of Theorem 3.2 of \cite{strichartz}, for any
$p>1,\ s<\frac{1}{p},$ when $h_{1}\in W^{s,p}\left(  \mathbf{R}\right)
,\ h_{2}\in W^{\frac{1}{p},p}\left(  \mathbf{R}\right)  \cap L^{\infty}\left(
\mathbf{R}\right)  ,$ we have%
\[
\left\Vert h_{1}h_{2}\right\Vert _{W^{s,p}}\leq C\left\Vert h_{1}\right\Vert
_{W^{s,p}}\left(  \left\Vert h_{2}\right\Vert _{W^{\frac{1}{p},p}}+\left\Vert
h_{2}\right\Vert _{L^{\infty}}\right)  .
\]
So
\begin{align*}
&  \left\Vert \left\Vert f\left(  \frac{v_{1}}{\delta}\right)  g\left(
v_{1},v_{2}\right)  \right\Vert _{W_{v_{1}}^{s,p}\left(  \mathbf{R}\right)
}\right\Vert _{L_{v_{2}}^{p}}\\
&  \leq C\left\Vert f\left(  \frac{v_{1}}{\delta}\right)  \right\Vert
_{W^{s,p}\left(  \mathbf{R}\right)  }\left\Vert \left\Vert g\right\Vert
_{_{W_{v_{1}}^{\frac{1}{p},p}\left(  \mathbf{R}\right)  }}+\left\Vert
g\right\Vert _{_{L_{v_{1}}^{\infty}\left(  \mathbf{R}\right)  }}\right\Vert
_{L_{v_{2}}^{p}}\\
&  \leq C\left\Vert f\left(  \frac{v_{1}}{\delta}\right)  \right\Vert
_{W^{s,p}\left(  \mathbf{R}\right)  }\left\Vert \left\Vert g\right\Vert
_{_{W_{v_{1}}^{1,p}\left(  \mathbf{R}\right)  }}\right\Vert _{L_{v_{2}}^{p}}\\
&  \leq C\left\Vert f\left(  \frac{v_{1}}{\delta}\right)  \right\Vert
_{W^{s,p}\left(  \mathbf{R}\right)  }\left\Vert g\right\Vert _{W^{1,p}\left(
\mathbf{R}^{2}\right)  }\text{ }\rightarrow0,\
\end{align*}
when $\delta\rightarrow0.$Since $\left\Vert f\left(  \frac{v_{1}}{\delta
}\right)  \right\Vert _{W^{s,p}\left(  \mathbf{R}\right)  }\rightarrow0$ under
the assumption $s<\frac{1}{p}$ (see \cite{lin-zeng-bgk} for a proof). By the
trace Theorem, we also have
\[
\left\Vert f\left(  \frac{v_{1}}{\delta}\right)  \left\Vert g\left(
v_{1},v_{2}\right)  \right\Vert _{W_{v_{2}}^{s,p}\left(  \mathbf{R}\right)
}\right\Vert _{L_{v_{1}}^{p}}\leq C\left\Vert f\left(  \frac{v_{1}}{\delta
}\right)  \right\Vert _{L^{p}}\left\Vert g\right\Vert _{W^{2,p}\left(
\mathbf{R}^{2}\right)  }\rightarrow0,\
\]
when $\delta\rightarrow0$. This proves (\ref{zero-lemma}) for $g\in
C_{0}^{\infty}\left(  \mathbf{R}^{2}\right)  $. When $g\in W^{s,p}\left(
\mathbf{R}^{2}\right)  $, (\ref{zero-lemma}) can be proved by using
$C_{0}^{\infty}\left(  \mathbf{R}^{2}\right)  $ functions as approximations.

Proof of (ii): By Fubini Theorem for $W^{s,p}\left(  \mathbf{R}^{2}\right)  $
norm,
\begin{align*}
&  \left\Vert f\left(  \frac{v_{1}}{\delta}\right)  g\left(  v_{2}\right)
\right\Vert _{W^{s,p}\left(  \mathbf{R}^{2}\right)  }\\
&  \leq C\left(  \left\Vert f\left(  \frac{v_{1}}{\delta}\right)  \right\Vert
_{W_{v_{1}}^{s,p}\left(  \mathbf{R}\right)  }\left\Vert g\left(  v_{2}\right)
\right\Vert _{L_{v_{2}}^{p}}+\left\Vert g\left(  v_{2}\right)  \right\Vert
_{W_{v_{2}}^{s,p}\left(  \mathbf{R}\right)  }\left\Vert f\left(  \frac{v_{1}%
}{\delta}\right)  \right\Vert _{L_{v_{1}}^{p}}\right) \\
&  \rightarrow0\text{, when }\delta\rightarrow0.
\end{align*}

\end{proof}

By the similar proof of Theorem \ref{thm-existence}, we can get the following.

\begin{theorem}
\label{Thm-existence-weighted}Assume the homogeneous distribution function%
\[
f_{0}\left(  v\right)  \in H^{s_{v},b}\left(  \mathbf{R}^{d}\right)
\ \ \left(  d\geq2,\ b>\frac{d-1}{4},\ s_{v}\in\lbrack0,\frac{3}{2})\ \right)
\]
satisfies%
\[
f_{0}\left(  v\right)  \geq0,\ \int f_{0}\left(  v\right)  dv=1,\ \int
v^{2}f_{0}\left(  v\right)  dv<+\infty.
\]
Fix $T_{1}>0$ and $c\in\mathbf{R}$. Then for any $\varepsilon>0$, $s_{x}%
\geq0,\ $there exist travelling wave solutions of the form $f=f_{\varepsilon
}\left(  x_{1}-ct,v\right)  ,$ $\vec{E}=E_{\varepsilon}\left(  x_{1}%
-ct\right)  \vec{e}_{1}$ to (\ref{vpe}), such that$\ \left(  f_{\varepsilon
}\left(  x_{1},v\right)  ,E_{\varepsilon}\left(  x_{1}\right)  \right)  $ has
minimal period $T_{1}$ in $x_{1}$,$\ f_{\varepsilon}\left(  x_{1},v\right)
\geq0,$ $E_{\varepsilon}\left(  x_{1}\right)  $ is not identically zero, and
\begin{equation}
\ \left\Vert f_{\varepsilon}-f_{0}\right\Vert _{L_{x_{1},v}^{1}}+\ \int
_{0}^{T_{1}}\int_{\mathbf{R}^{d}}\left\vert v\right\vert ^{2}\left\vert
f_{\varepsilon}\left(  x_{1},v\right)  -\ f_{0}\left(  v\right)  \right\vert
dx_{1}dv+\left\Vert f_{\varepsilon}-f_{0}\right\Vert _{H_{x}^{s_{x}}%
H_{v}^{s_{v},b}}<\varepsilon.\ \label{BGK-norm-weighted}%
\end{equation}

\end{theorem}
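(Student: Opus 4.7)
The plan is to follow the three-case construction used in the proof of Theorem \ref{thm-existence}, with the Sobolev norm $W^{s,p}$ replaced throughout by the weighted norm $H_x^{s_x}H_v^{s_v,b}$. First, I would modify $f_0$ to a smooth auxiliary profile $f_1 \in C^\infty \cap H^{2,b}(\mathbf{R}^d)$ which is still nonnegative, has unit mass, and is even in $v_1$ on a neighborhood $|v_1|\leq \delta_2$ of zero, staying within $\varepsilon/3$ of $f_0$ in the norm of (\ref{BGK-norm-weighted}). This is done in the same two stages as before (mollification by $\eta_{\delta_1}\ast f_0$, followed by symmetrization $f_1 = f_{\delta_1}-\tfrac12(f_{\delta_1}(v_1,v')-f_{\delta_1}(-v_1,v'))\sigma(v_1/\delta_2)$), and the weight $(1+|v|^2)^b$ causes no trouble since both operations commute with a polynomial-in-$v$ weight up to bounded multipliers supported on the relevant regions. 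The key estimate that has to be checked in this setting is the analogue of Lemma \ref{lemma-delta-0} for $H_v^{s_v,b}$ with $s_v<\tfrac32$, namely that $\|f(v_1/\delta)\,g(v_1,v_2)\|_{H_v^{s_v,b}}\to 0$ when $f\in H^{1,b}(\mathbf{R})$ and $g$ is regular enough; this is the weighted $L^2$ analogue of the $W^{s,p}$ estimate with $p=2$, and $s_v<\tfrac32$ is precisely $s_v<1+\tfrac1p$ for $p=2$, so the same one-dimensional interpolation/scaling argument of Strichartz applies after a trivial Fubini reduction, the weight being an innocuous multiplier uniform in $\delta$.

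Second, I would split into the three cases according to whether $\int \partial_{v_1}f_1(v)/v_1\,dv$ is less than, greater than, or equal to $(2\pi/T_1)^2$, with the same perturbation $f_{\gamma,\delta}(v_1,v_2) = (1+C_0\gamma^2)^{-1}[f_1+\frac{\gamma}{\delta}F_1(v_1/(\gamma\delta))F_2(v_2)]$ in Cases 1 and 2, and the scaling $f_\delta(v_1,v_2) = \frac1\delta f_1(v_1/\delta,v_2)$ in Case 3. In each case, I would decompose $f_1(\pm\sqrt{x_1},v_2)$ as in Theorem \ref{thm-existence} into the smooth functions $g_\pm(x_1,v_2)$ and look for a BGK wave of the form $f_\bullet^{\beta}(x_1,v)$ with electric field $E^0(x_1)=-\beta'(x_1)$, so that the steady Vlasov--Poisson equation reduces to the one-dimensional ODE $\beta''=h_\bullet(\beta)$. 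The linearization at $\beta=0$ has center $h_\bullet'(0) = -\int\partial_{v_1}f_\bullet/v_1\,dv<0$, so by the monotonicity (\ref{inequality-period})-type intermediate-value argument one gets a one-parameter family of periodic solutions $\beta_{\gamma,\delta;r}$ of period $T(\gamma,\delta;r)$, and by continuously adjusting $\delta\in(\delta_1,\delta_2)$ at each small amplitude $r$ the period is pinned at $T_1$. Since these BGK waves depend on $x_1$ only through $\beta(x_1)$ which is $C^\infty$ periodic as a solution of an autonomous ODE with smooth right-hand side, the $H_x^{s_x}$ regularity in $x_1$ (for any $s_x\geq 0$) comes for free.

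Third, to close the estimate in the $H_x^{s_x}H_v^{s_v,b}$ norm, I would split $f_{\gamma,r}-f_0$ as
\[
(f_{\gamma,r}(x_1,v) - f_{\gamma,\delta(\gamma)}(v)) + (f_{\gamma,\delta(\gamma)}(v) - f_1(v)) + (f_1(v)-f_0(v)).
\]
The last two pieces are $x$-independent, so in the $H_x^{s_x}H_v^{s_v,b}$ norm they reduce to the zero-mode contribution $\|\cdot\|_{H_v^{s_v,b}}$ and are handled by the initial modification and by the weighted analogue of Lemma \ref{lemma-delta-0}, which gives $\|f_1-f_{\gamma,\delta(\gamma)}\|_{H_v^{s_v,b}}\to 0$ as $\gamma\to 0$ in Cases 1--2 and $\|f_1-f_\delta\|_{H_v^{s_v,b}}\to 0$ as $\delta\to 1$ in Case 3. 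The first piece depends on $x_1$ only through $\beta_{\gamma,r}$ and its derivatives; using the chain rule in $v$ (note $e=\tfrac12 v_1^2-\beta(x_1)$) together with the smoothness of $g_\pm,G_1,F_2$ and the weighted-$H^{s_v,b}$ boundedness of polynomial multiples of $v_1,v_2$ on the compact set where $\partial_{v_1}\beta\neq 0$ matters, a dominated-convergence argument shows this piece tends to zero in $H_x^{s_x}H_v^{s_v,b}$ as $r\to 0$. Choosing first $\gamma$ (or $\delta$) small enough and then $r$ small enough produces the desired solution within $\varepsilon$ of $(f_0,\vec 0)$.

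The main obstacle I expect is the weighted analogue of Lemma \ref{lemma-delta-0}, since the bump functions $F_1(v_1/(\gamma\delta))$ are localized near $v_1\sim\pm v_0\gamma\delta$ and their scaling exponents in $H_v^{s_v,b}$ must beat the factor $\gamma/\delta$; however, because $s_v<\tfrac32=1+\tfrac12$ one still obtains smallness uniformly in $\delta\in[\delta_1,\delta_2]$ as $\gamma\to 0$, so the same endpoint $s_v=\tfrac32$ emerges as in the unweighted $L^2$-based case, consistent with the critical regularity asserted in the statement.
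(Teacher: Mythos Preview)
Your proposal is correct and follows essentially the same approach as the paper: the same three-way decomposition $f_{\gamma,r}-f_{\gamma,\delta(\gamma)}$, $f_{\gamma,\delta(\gamma)}-f_1$, $f_1-f_0$, with the second and third pieces handled by a weighted analogue of Lemma~\ref{lemma-delta-0} (stated in the paper as Lemma~\ref{lemma-delta-0-H}), and the first piece by dominated convergence after bounding by an integer-exponent norm $H_x^{\bar s_x}H_v^{\bar s_v,\bar b}$. Your identification of $s_v<\tfrac32$ as the $p=2$ case of $s<1+\tfrac1p$ is exactly the mechanism behind Lemma~\ref{lemma-delta-0-H}.
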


\begin{proof}
The construction of BGK waves follows the same line of the proof of Theorem
\ref{thm-existence}. First, we modify $f_{0}\left(  v\right)  $ to a smooth
profile $f_{1}\left(  v\right)  $. Then by adding proper perturbations in a
scaling form to $f_{1}\left(  v\right)  $, we get the modified profile
$f_{\gamma,\delta}\left(  v\right)  .$ The BGK waves $\left(  f_{\varepsilon
}\left(  x_{1},v\right)  ,E_{\varepsilon}\left(  x_{1}\right)  \right)  \ $are
obtained by bifurcation near $\left(  f_{\gamma,\delta}\left(  v\right)
,0\right)  .$ To show the estimate (\ref{BGK-norm-weighted}), we need to
control three deviations in the norm of (\ref{BGK-norm-weighted}): i)
$f_{\varepsilon}\left(  x_{1},v\right)  -f_{\gamma,\delta}\left(  v\right)
;\ $ii) $f_{\gamma,\delta}\left(  v\right)  -f_{1}\left(  v\right)  ;$ and
iii) $f_{1}\left(  v\right)  -f_{0}\left(  v\right)  $. For the estimate of
i), we choose integers $\bar{s}_{x}\geq s_{x},\ \bar{s}_{v}\geq s_{v},$ and
$\bar{b}\geq b$ and it is easy to show that
\[
\left\Vert f_{\varepsilon}\left(  x_{1},v\right)  -f_{\gamma,\delta}\left(
v\right)  \right\Vert _{H_{x}^{s_{x}}H_{v}^{s_{v},b}}\leq C\left\Vert
f_{\varepsilon}\left(  x_{1},v\right)  -f_{\gamma,\delta}\left(  v\right)
\right\Vert _{H_{x}^{\bar{s}_{x}}H_{v}^{\bar{s}_{v},\bar{b}}}%
\]
and the right hand side can be made arbitrarily small by using the dominant
convergence Theorem. For estimates of ii) and iii), we use the following
analogue of Lemma \ref{lemma-delta-0}.
\end{proof}

\begin{lemma}
\label{lemma-delta-0-H}

(i) Given $f\left(  v_{1}\right)  \in H^{\frac{1}{2}}\left(  \mathbf{R}%
\right)  \cap L^{\infty}\left(  \mathbf{R}\right)  ,\ g\left(  v_{1}%
,v_{2}\right)  \in H^{s,b}\left(  \mathbf{R}^{2}\right)  \ \left(  0\leq
s<\frac{1}{2},b>\frac{1}{4}\right)  $. For $\delta>0,\ $
\[
\left\Vert f\left(  \frac{v_{1}}{\delta}\right)  g\left(  v_{1},v_{2}\right)
\right\Vert _{H^{s,b}\left(  \mathbf{R}^{2}\right)  }\rightarrow0\text{, when
}\delta\rightarrow0\text{. }%
\]

(ii) Given $f,g\in H^{s,b}\left(  \mathbf{R}\right)  $ $\left(  0\leq
s<\frac{1}{2},b>\frac{1}{4}\right)  $.~For $\delta>0,\ $
\[
\left\Vert f\left(  \frac{v_{1}}{\delta}\right)  g\left(  v_{2}\right)
\right\Vert _{H^{s,b}\left(  \mathbf{R}^{2}\right)  }\rightarrow0\text{, when
}\delta\rightarrow0\text{. }%
\]

\end{lemma}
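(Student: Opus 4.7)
The plan is to parallel the proof of Lemma \ref{lemma-delta-0}, replacing the $W^{s,p}$-based Strichartz tools by $H^{s,b}$ analogues, and localizing the weight $(1+|v|^2)^b$ by a density argument wherever possible.

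For part (i), I would first establish a uniform-in-$\delta$ product estimate of the form
\[
\|f(v_1/\delta)\, h\|_{H^{s,b}(\mathbf{R}^2)} \le C\bigl(\|f\|_{H^{1/2}(\mathbf{R})} + \|f\|_{L^\infty(\mathbf{R})}\bigr)\|h\|_{H^{s,b}(\mathbf{R}^2)}
\]
for $h \in H^{s,b}(\mathbf{R}^2)$, $s<1/2$, $b>1/4$, with $C$ independent of $\delta$. This follows from a Fubini-type decomposition of $H^{s,b}(\mathbf{R}^2)$ into iterated one-dimensional weighted norms together with the standard one-dimensional product estimate $\|h_1 h_2\|_{H^s(\mathbf{R})} \le C \|h_1\|_{H^s}(\|h_2\|_{H^{1/2}}+\|h_2\|_{L^\infty})$ for $s<1/2$. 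The constant is uniform in $\delta$ because the one-dimensional scaling keeps both $\|f(v_1/\delta)\|_{L^\infty}$ and $\|f(v_1/\delta)\|_{\dot H^{1/2}}$ invariant. By density of $C_0^\infty(\mathbf{R}^2)$ in $H^{s,b}(\mathbf{R}^2)$, this reduces the problem to $g \in C_0^\infty(\mathbf{R}^2)$. For such $g$ the weight is bounded on $\mathrm{supp}(g)$, so
\[
\|f(v_1/\delta)\, g\|_{H^{s,b}(\mathbf{R}^2)} \le C_g \|f(v_1/\delta)\, g\|_{H^s(\mathbf{R}^2)},
\]
and the right side tends to $0$ exactly by the argument used for Lemma \ref{lemma-delta-0}(i): the 2D Fubini decomposition reduces matters to showing $\|f(v_1/\delta)\|_{H^s(\mathbf{R})} \to 0$ and $\|f(v_1/\delta)\|_{L^2(\mathbf{R})} \to 0$, both of which follow from the Fourier-side scaling identity $\|f(v_1/\delta)\|_{H^s(\mathbf{R})}^2 = \delta\int(1+|\eta|^2/\delta^2)^s|\hat f(\eta)|^2\,d\eta \to 0$ whenever $s < 1/2$, by dominated convergence.

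For part (ii), the tensor product structure simplifies the argument. Splitting $(1+|v|^2)^b \le C(1+v_1^2)^b(1+v_2^2)^b$ and using a Fubini decomposition of the 2D $H^s$ derivative in $v_1,v_2$, one reduces to a bound by sums of products of one-dimensional weighted norms:
\[
\|f(v_1/\delta)\, g(v_2)\|_{H^{s,b}(\mathbf{R}^2)} \le C\bigl[\|f(v_1/\delta)\|_{H^{s,b}(\mathbf{R})}\|g\|_{L^{2,b}(\mathbf{R})}+\|f(v_1/\delta)\|_{L^{2,b}(\mathbf{R})}\|g\|_{H^{s,b}(\mathbf{R})}\bigr].
\]
A direct scaling computation shows both norms of $f(v_1/\delta)$ tend to $0$: for instance $\|f(v_1/\delta)\|_{L^{2,b}(\mathbf{R})}^2 = \delta\int(1+\delta^2 u^2)^{2b}|f(u)|^2\,du \to 0$ by dominated convergence (with dominant $(1+u^2)^{2b}|f(u)|^2$ integrable on account of $f \in H^{s,b}$), and a similar Fourier-side computation handles the $H^{s,b}$ factor, using that $s<1/2$.

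The main obstacle I expect is establishing the weighted product and Fubini decomposition of step one, since the multiplicative weight $(1+|v|^2)^b$ and the nonlocal operator $(1-\Delta)^{s/2}$ do not commute exactly; handling the resulting commutator terms at fractional order $s$ requires some care. Once that technical point is settled, the reduction to compactly supported $g$ renders the weight locally inactive and the rest of the argument is a direct transcription of the unweighted proof of Lemma \ref{lemma-delta-0}.
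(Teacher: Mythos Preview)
Your overall plan is sound and your diagnosis of the obstacle is accurate, but you have not actually resolved the commutator issue you flag at the end, and the paper's proof hinges precisely on a clean resolution of it. The paper first proves that for $0\le s\le 2$ the norm $\|h\|_{H^{s,b}(\mathbf{R}^d)}=\|(1+|v|^2)^b(1-\Delta)^{s/2}h\|_{L^2}$ is equivalent to $\|(1+|v|^2)^b h\|_{H^s}$, and also to the additive version $\|(1+|v_1|^{2b}+\cdots+|v_d|^{2b})h\|_{H^s}$. This is checked directly at $s=0$ and $s=2$ (where the commutator is a first-order error absorbable for small rescaled weight) and then interpolated. That single equivalence dissolves the commutator problem and makes both parts almost immediate.

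With the equivalence in hand, part~(i) becomes a one-line reduction: $\|f(v_1/\delta)g\|_{H^{s,b}}\le C\|f(v_1/\delta)\,[(1+|v|^2)^b g]\|_{H^s(\mathbf{R}^2)}$, and since $(1+|v|^2)^b g\in H^s(\mathbf{R}^2)$, Lemma~\ref{lemma-delta-0}(i) applies directly. No uniform-in-$\delta$ multiplier bound and no density argument are needed; your route works but is longer, and the uniform product estimate you posit itself rests on the very Fubini/commutator issue you left open. For part~(ii) the paper uses the \emph{additive} equivalent norm, which splits $(1+|v_1|^{2b}+|v_2|^{2b})f(v_1/\delta)g(v_2)$ into three genuine tensor products, each handled by Lemma~\ref{lemma-delta-0}(ii). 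Your multiplicative splitting $(1+|v|^2)^b\le C(1+v_1^2)^b(1+v_2^2)^b$ is a pointwise inequality that does not transfer to the $H^s$ norm (you cannot replace a weight by a larger one inside $\|\cdot\|_{H^s}$), so that step as written does not go through; the additive decomposition is what makes the argument clean.
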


\begin{proof}
First, we show that for any function $h\in H^{s,b}\left(  \mathbf{R}%
^{d}\right)  \ \left(  d\geq1,0\leq s\leq2,b>\frac{1}{4}\right)  ,$ the norm
$\left\Vert h\right\Vert _{H^{s,b}\left(  \mathbf{R}^{d}\right)  }$ defined by
(\ref{definition-H-tilde}) is equivalent to both%
\begin{equation}
\left\Vert \left(  1+\left\vert v\right\vert ^{2}\right)  ^{b}f\right\Vert
_{H^{s}\left(  \mathbf{R}^{d}\right)  } \label{norm-equi-1}%
\end{equation}
$\ $and
\begin{equation}
\left\Vert \left(  1+\left\vert v_{1}\right\vert ^{2b}+\cdots+\left\vert
v_{d}\right\vert ^{2b}\right)  f\right\Vert _{H^{s}\left(  \mathbf{R}%
^{d}\right)  }. \label{norm-equi-2}%
\end{equation}
We only need to prove the equivalence of the norms (\ref{definition-H-tilde})
and (\ref{norm-equi-1}) for $s=0$ and $s=2$, since then for $0<s<2$ it follows
from interpolation. For $s=0,$ it is trivial. For $s=2,$ by choosing $a>0$
small enough, we have
\begin{align*}
&  \left\Vert \left(  1+a\left\vert v\right\vert ^{2}\right)  ^{b}\left(
1-\Delta\right)  f-\left(  1-\Delta\right)  \left(  1+a\left\vert v\right\vert
^{2}\right)  ^{b}f\right\Vert _{L^{2}\left(  \mathbf{R}^{d}\right)  }\\
&  =\left\Vert f\ \Delta\left(  \left(  1+a\left\vert v\right\vert
^{2}\right)  ^{b}-1\right)  +2\nabla f\cdot\nabla\left(  \left(  1+a\left\vert
v\right\vert ^{2}\right)  ^{b}-1\right)  \right\Vert _{L^{2}}\\
&  \leq\frac{1}{2}\left\Vert \left(  1+a\left\vert v\right\vert ^{2}\right)
^{b}\left(  1-\Delta\right)  f\right\Vert _{L^{2}}.
\end{align*}
Thus
\begin{align*}
\frac{1}{2}\left\Vert \left(  1+a\left\vert v\right\vert ^{2}\right)
^{b}\left(  1-\Delta\right)  f\right\Vert _{L^{2}}  &  \leq\left\Vert \left(
1-\Delta\right)  \left(  1+a\left\vert v\right\vert ^{2}\right)
^{b}f\right\Vert _{L^{2}}\\
&  \leq\frac{3}{2}\left\Vert \left(  1+a\left\vert v\right\vert ^{2}\right)
^{b}\left(  1-\Delta\right)  f\right\Vert _{L^{2}}.
\end{align*}
The equivalence of (\ref{definition-H-tilde}) and (\ref{norm-equi-2}) can be
proved in the same way.

Proof of (i): By Lemma \ref{lemma-delta-0} (i),
\[
\left\Vert f\left(  \frac{v_{1}}{\delta}\right)  g\left(  v_{1},v_{2}\right)
\right\Vert _{H^{s,b}\left(  \mathbf{R}^{2}\right)  }\leq C\left\Vert \left(
1+\left\vert v\right\vert ^{2}\right)  ^{b}f\left(  \frac{v_{1}}{\delta
}\right)  g\left(  v_{1},v_{2}\right)  \right\Vert _{H^{s}\left(
\mathbf{R}^{2}\right)  }\rightarrow0,
\]
when $\delta\rightarrow0$. Since $f\left(  v_{1}\right)  \in H^{\frac{1}{2}%
,}\left(  \mathbf{R}\right)  \cap L^{\infty}\left(  \mathbf{R}\right)  $ and
\[
\left\Vert \left(  1+\left\vert v\right\vert ^{2}\right)  ^{b}g\left(
v_{1},v_{2}\right)  \right\Vert _{H^{s}}\leq C\left\Vert g\right\Vert
_{H^{s,b}\left(  \mathbf{R}^{2}\right)  }<\infty.
\]

Proof of (ii): By using the equivalent norm (\ref{norm-equi-2}) and Lemma
\ref{lemma-delta-0} (ii),%
\begin{align*}
&  \left\Vert f\left(  \frac{v_{1}}{\delta}\right)  g\left(  v_{2}\right)
\right\Vert _{H^{s,b}\left(  \mathbf{R}^{2}\right)  }\\
&  \leq C\left\Vert \left(  1+\left\vert v_{1}\right\vert ^{2b}+\left\vert
v_{2}\right\vert ^{2b}\right)  f\left(  \frac{v_{1}}{\delta}\right)  g\left(
v_{2}\right)  \right\Vert _{H^{s}\left(  \mathbf{R}^{2}\right)  }\\
&  \leq C\left(  \left\Vert f\left(  \frac{v_{1}}{\delta}\right)  g\left(
v_{2}\right)  \right\Vert _{H^{s}\left(  \mathbf{R}^{2}\right)  }+\delta
^{2b}\left\Vert \left\vert \frac{v_{1}}{\delta}\right\vert ^{2b}f\left(
\frac{v_{1}}{\delta}\right)  g\left(  v_{2}\right)  \right\Vert _{H^{s}\left(
\mathbf{R}^{2}\right)  }+\left\Vert f\left(  \frac{v_{1}}{\delta}\right)
\left\vert v_{2}\right\vert ^{2b}g\left(  v_{2}\right)  \right\Vert
_{H^{s}\left(  \mathbf{R}^{2}\right)  }\right) \\
&  \rightarrow0,\text{ when }\delta\rightarrow0.
\end{align*}

\end{proof}

In the following, we show that there exist no truly $2$D or $3$D BGK
solutions. Therefore, the $1D$ BGK form of solutions constructed in Theorem
\ref{thm-existence} is in some sense necessary.

\begin{proposition}
\label{prop-non-bgk}(i) $\left(  d=2\right)  $ Assume $\mu\in C^{1}\left(
\mathbf{R}\right)  \cap L^{1}\left(  \mathbf{R}^{+}\right)  ,\ \mu\geq0.$ If
\[
f_{0}\left(  x,v\right)  =\mu\left(  \frac{1}{2}\left\vert v\right\vert
^{2}-\beta\left(  x\right)  \right)  ,\ \ \ \ \vec{E}_{0}\left(  x\right)
=-\nabla\beta
\]
is a solution of the Vlasov-Poisson system, then $\vec{E}_{0}\equiv0.$

(ii) $\left(  d=3\right)  $ Assume $\mu\in C^{1}\left(  \mathbf{R}\right)
\cap L^{1}\left(  \mathbf{R}^{+}\right)  ,\ \mu\geq0.\ $If
\[
f_{0}\left(  x_{1},x_{2},v_{1},v_{2},v_{3}\right)  =\mu\left(  \frac{1}%
{2}\left(  v_{1}^{2}+v_{2}^{2}\right)  -\beta\left(  x_{1},x_{2}\right)
,v_{3}\right)  ,\ \ \ \ \vec{E}_{0}\left(  x_{1},x_{2}\right)  =\left(
-\partial_{x_{1}}\beta,-\partial_{x_{2}}\beta,0\right)
\]
is a solution of the Vlasov-Poisson system, then $\vec{E}_{0}\equiv0.$

(iii) $\left(  d=3\right)  \ $Assume $\mu\in C^{1}\left(  \mathbf{R}\right)
,\ \mu\geq0,\ \mu\left(  r\right)  \sqrt{r}\in L^{1}\left(  \mathbf{R}%
^{+}\right)  .\ $If
\[
f_{0}\left(  x,v\right)  =\mu\left(  \frac{1}{2}\left\vert v\right\vert
^{2}-\beta\left(  x\right)  \right)  ,\ \ \ \ \vec{E}_{0}\left(  x\right)
=-\nabla\beta
\]
is a solution of the Vlasov-Poisson system, then $\vec{E}_{0}\equiv0.$
\end{proposition}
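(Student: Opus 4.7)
The approach is to observe that any ansatz of this form automatically solves the Vlasov equation (since $\mu$ depends only on a conserved quantity of the single-particle Hamiltonian flow), so the full system collapses to a single scalar semilinear equation
\[
-\Delta \beta = 1 - M(\beta),
\]
where $M(\beta) := \int f_{0}\,dv$ is the macroscopic density expressed as a function of the potential $\beta$. The heart of the argument is then to show, in all three cases, that $M$ is a nondecreasing function of $\beta$, and to invoke the maximum principle on the periodic box.

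In case (i), polar coordinates in $v$ together with the substitution $s = \tfrac{1}{2}|v|^{2}-\beta$ give $M(\beta) = 2\pi\int_{-\beta}^{\infty}\mu(s)\,ds$, which is nondecreasing because $\mu\geq 0$. Case (ii) reduces immediately to (i) after integrating out $v_{3}$: with $\tilde\mu(s):=\int_{\mathbb{R}}\mu(s,v_{3})\,dv_{3}$ one obtains $M(\beta)=2\pi\int_{-\beta}^{\infty}\tilde\mu(s)\,ds$, and the Poisson equation is posed on the two-dimensional torus in $(x_{1},x_{2})$. In case (iii), spherical coordinates in $v$ yield
\[
M(\beta) = 4\pi\sqrt{2}\int_{-\beta}^{\infty}\mu(s)\sqrt{s+\beta}\,ds,
\]
and monotonicity follows by direct comparison: for $\beta_{1}>\beta_{2}$,
\[
M(\beta_{1})-M(\beta_{2}) = 4\pi\sqrt{2}\int_{-\beta_{1}}^{-\beta_{2}}\mu(s)\sqrt{s+\beta_{1}}\,ds + 4\pi\sqrt{2}\int_{-\beta_{2}}^{\infty}\mu(s)\bigl[\sqrt{s+\beta_{1}}-\sqrt{s+\beta_{2}}\bigr]\,ds \ge 0.
\]
The integrability assumption $\mu(r)\sqrt{r}\in L^{1}(\mathbb{R}^{+})$ is exactly what guarantees $M(\beta)<\infty$ and legitimizes this difference identity.

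With monotonicity in hand, the conclusion is uniform. Let $x_{*}$ and $x^{*}$ be points where $\beta$ attains its maximum and minimum on the periodic box. At $x_{*}$ we have $-\Delta\beta(x_{*})\ge 0$, hence $M(\beta(x_{*}))\le 1$; at $x^{*}$ we have $M(\beta(x^{*}))\ge 1$. Since $M$ is nondecreasing and $\beta(x^{*})\le \beta(x)\le \beta(x_{*})$ for every $x$, these pinch to $M(\beta(x))\equiv 1$. Plugging back into the Poisson equation gives $\Delta\beta\equiv 0$, so $\beta$ is harmonic on the torus and therefore constant, which forces $\vec{E}_{0}=-\nabla\beta\equiv 0$.

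The main obstacle I anticipate is isolated to case (iii): verifying that the $v$-integral and the monotonicity identity above are well-defined under the comparatively weak hypothesis $\mu(r)\sqrt{r}\in L^{1}$ (in particular without a pointwise derivative of $M$), and ensuring that the boundary contribution at $s=-\beta$ indeed vanishes thanks to the $\sqrt{s+\beta}$ weight. Once this bookkeeping is in place, the maximum principle step is identical in all three parts, and the reduction of Vlasov to Poisson via the conserved-energy ansatz is standard.
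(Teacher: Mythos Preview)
Your argument is correct, and the monotonicity computations for $M(\beta)$ in all three cases are accurate. The route, however, differs from the paper's. Having reduced to $-\Delta\beta = g(\beta)$ with $g(\beta)=1-M(\beta)$, the paper differentiates this equation in $x_j$, multiplies by $\beta_{x_j}$, and integrates by parts over the torus to obtain
\[
\int_{\mathbf{T}^d}|\nabla\beta_{x_j}|^2\,dx=\int_{\mathbf{T}^d}g'(\beta)\,|\beta_{x_j}|^2\,dx\le 0,
\]
using $g'(\beta)=-2\pi\mu(-\beta)\le 0$ in case (i) (and analogous identities in (ii), (iii)); this forces $\beta_{x_j}$ to be constant, hence zero by periodicity. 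You instead apply the maximum principle directly to the scalar equation, pinching $M(\beta)$ to the value $1$ between the extrema and concluding that $\beta$ is harmonic on the torus. Both approaches are standard and rest on the same monotonicity of the density function; your version has the minor advantage that it uses only monotonicity of $M$ and not differentiability, so the bookkeeping you flag in case (iii) (where one has only $\mu(r)\sqrt{r}\in L^1$) is handled cleanly by your difference identity without ever forming $M'(\beta)$.
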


\begin{proof}
We only prove (i) since the proof of (ii) and (iii) is similar. The electric
potential $\beta$ satisfies
\begin{equation}
-\Delta\beta=-\int_{\mathbf{R}^{2}}\mu\left(  \frac{1}{2}\left\vert
v\right\vert ^{2}-\beta\right)  \ dv+1=g\left(  \beta\right)  .
\label{eqn-beta}%
\end{equation}
By the assumptions on $\mu$, we have $g\left(  \beta\right)  \in C^{1}\left(
\mathbf{R}\right)  $ and
\begin{align*}
g^{\prime}\left(  \beta\right)   &  =-\int_{\mathbf{R}^{2}}\mu^{\prime}\left(
\frac{1}{2}\left\vert v\right\vert ^{2}-\beta\left(  x\right)  \right)  dv\ \\
&  =2\pi\int_{0}^{\infty}\mu^{\prime}\left(  s-\beta\right)  ds=-2\pi
\mu\left(  -\beta\right)  \leq0,
\end{align*}
Taking $x_{1}$ derivative of (\ref{eqn-beta}) and integrating with
$\beta_{x_{1}}$, we have
\[
\int_{T^{2}}\left\vert \nabla\beta_{x_{1}}\right\vert ^{2}\ dx=\int
_{\mathbf{T}^{2}}g^{\prime}\left(  \beta\right)  \left\vert \beta_{x_{1}%
}\right\vert ^{2}dx\leq0.
\]
So $\int_{T^{2}}\left\vert \nabla\beta_{x_{1}}\right\vert ^{2}\ dx=0$ and
$\beta_{x_{1}}$ is a constant $C$. By the periodic assumption of $\beta$,
$C=0$ and thus $\beta_{x_{1}}\equiv0.$ Similarly, $\beta_{x_{2}}\equiv0.$
\end{proof}

\begin{remark}
In $2$D and $3$D, the function $g\left(  \beta\right)  $ defined in
(\ref{eqn-beta}) always satisfies $g^{\prime}\left(  \beta\right)  \leq0$ and
thus the elliptic problem (\ref{eqn-beta}) only has trivial solutions. For
$1$D, the function $g^{\prime}\left(  \beta\right)  $ can change signs and
thus the existence of $1$D BGK waves is possible. We note that Proposition
\ref{prop-non-bgk} does not exclude steady (travelling wave) solutions in $2$D
and $3$D, which are not of BGK types. It would be interesting to construct or
exclude nontrivial steady solutions not of BGK types.
\end{remark}

\section{Invariant structures in $H_{x}^{s_{x}}H_{v}^{s_{v},b}$ $\left(
s>\frac{3}{2}\right)  $}

First, we prove a technical lemma to be used later.

\begin{lemma}
\label{lemma-proj-inequality}Given $f\left(  v\right)  \in H^{s,b}\left(
\mathbf{R}^{d}\right)  \ \ \left(  d\geq2,\ s\geq0,\ b>\frac{d-1}{4}\right)
.$ For any unit vector $\vec{e}\in\mathbf{R}^{d}$, let $v=\alpha\vec{e}+w$
where $v\in\mathbf{R}^{d}$ and $w\perp\vec{e}$. Define
\begin{equation}
f_{\vec{e}}\left(  \alpha\right)  =\int_{\mathbf{R}^{d-1}}f\left(  \alpha
\vec{e}+w\right)  \ dw. \label{defn-f-e-arrow}%
\end{equation}
Then
\[
\left\Vert f_{\vec{e}}\left(  \alpha\right)  \right\Vert _{H^{s}\left(
\mathbf{R}\right)  }\leq C\left\Vert f\right\Vert _{H^{s,b}\left(
\mathbf{R}^{d}\right)  }%
\]
for some constant $C$ independent of $\vec{e}.$
\end{lemma}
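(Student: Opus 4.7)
The plan is to reduce to the case $s=0$ via the Fourier slice theorem and then obtain the $L^2$-level bound by weighted Cauchy--Schwarz; the weight integral will converge precisely because $b>(d-1)/4$.

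First I would apply the Fourier slice theorem. Since $f_{\vec{e}}(\alpha) = \int_{w\perp\vec{e}} f(\alpha\vec{e}+w)\,dw$, the one-dimensional Fourier transform $\widehat{f_{\vec{e}}}(\tau)$ equals $\hat{f}(\tau\vec{e})$. Setting $g=(1-\Delta)^{s/2}f$ so that $\hat g(\xi)=(1+|\xi|^2)^{s/2}\hat f(\xi)$ and $\|f\|_{H^{s,b}}=\|(1+|v|^2)^b g\|_{L^2}$, Plancherel gives
\[
\|f_{\vec{e}}\|_{H^s(\mathbf{R})}^{2}
= \int_{\mathbf{R}}(1+\tau^2)^s|\hat f(\tau\vec{e})|^2\,d\tau
= \int_{\mathbf{R}}|\hat g(\tau\vec{e})|^2\,d\tau
= \|g_{\vec{e}}\|_{L^2(\mathbf{R})}^{2},
\]
where the last equality uses the Fourier slice theorem in reverse for $g$. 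Thus it suffices to prove the $L^2$ estimate $\|g_{\vec{e}}\|_{L^2}\le C\|(1+|v|^2)^b g\|_{L^2}$ with $C$ independent of $\vec{e}$.

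For the $L^2$ estimate I would apply Cauchy--Schwarz to the defining integral of $g_{\vec{e}}(\alpha)$, inserting the weight $(1+|\alpha\vec{e}+w|^2)^{b}$:
\[
|g_{\vec{e}}(\alpha)|^2 \le \Bigl(\int_{w\perp\vec{e}}(1+|\alpha\vec{e}+w|^2)^{-2b}\,dw\Bigr)\Bigl(\int_{w\perp\vec{e}}(1+|\alpha\vec{e}+w|^2)^{2b}|g(\alpha\vec{e}+w)|^2\,dw\Bigr).
\]
A rescaling $w=\sqrt{1+\alpha^2}\,u$ (valid because the hyperplane is rotationally symmetric about $\vec{e}$) shows
\[
\int_{\mathbf{R}^{d-1}}(1+\alpha^2+|w|^2)^{-2b}\,dw = C_{b,d}(1+\alpha^2)^{-(2b-(d-1)/2)},
\]
and the constant $C_{b,d}=\int_{\mathbf{R}^{d-1}}(1+|u|^2)^{-2b}du$ is finite \emph{exactly} when $4b>d-1$, which is the standing hypothesis; it depends on neither $\alpha$ nor $\vec{e}$. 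Since $2b-(d-1)/2>0$, the factor $(1+\alpha^2)^{-(2b-(d-1)/2)}\le1$, so integrating in $\alpha$ and using Fubini on the product decomposition $dv=d\alpha\,dw$ yields
\[
\|g_{\vec{e}}\|_{L^2}^2 \le C_{b,d}\int_{\mathbf{R}^d}(1+|v|^2)^{2b}|g(v)|^2\,dv = C_{b,d}\|f\|_{H^{s,b}}^{2}.
\]

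The only delicate point will be checking that Fourier slice is justified here: $g$ lies only in the weighted space $(1+|v|^2)^{-b}L^2$ and need not be in $L^1(\mathbf{R}^d)$, so the slice $\hat g(\tau\vec{e})$ is a priori a distributional trace. I would handle this by first proving the identity for Schwartz $f$, where both sides are continuous in $f$ in the natural norms (the left by Plancherel, the right by the Cauchy--Schwarz argument just given), and then extending by density of Schwartz functions in $H^{s,b}(\mathbf{R}^d)$. The rotational independence of $C_{b,d}$ then delivers the uniformity in $\vec{e}$ as required.
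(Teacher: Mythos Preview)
Your argument is correct. Both you and the paper begin with the Fourier slice identity $\widehat{f_{\vec{e}}}(\tau)=\hat f(\tau\vec{e})$ and set $g=(1-\Delta)^{s/2}f$ to reduce the $H^s$ estimate to an $L^2$ estimate. The paper then stays on the Fourier side and invokes the trace theorem: since $2b>(d-1)/2$, the restriction of $\hat g\in H^{2b}(\mathbf{R}^d_\xi)$ to the line $\{\tau\vec{e}\}$ lies in $L^2$, and $\|\hat g\|_{H^{2b}_\xi}=\|(1+|v|^2)^b g\|_{L^2_v}=\|f\|_{H^{s,b}}$. You instead pass back to the physical side and bound $\|g_{\vec{e}}\|_{L^2}$ directly by weighted Cauchy--Schwarz, with the convergence of $\int_{\mathbf{R}^{d-1}}(1+|u|^2)^{-2b}du$ playing exactly the role of the trace-theorem threshold $2b>(d-1)/2$. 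These two steps are Fourier duals of one another, so the approaches are essentially the same; your version has the virtue of being self-contained (no black-box trace theorem) and of making the uniformity in $\vec{e}$ and the sharpness of the condition $b>(d-1)/4$ completely explicit, while the paper's version is a line shorter. Your density remark to justify the slice identity is appropriate and the paper tacitly relies on the same point.
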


\begin{proof}
To simplify notations, we only consider $\vec{e}=\left(  1,0,\cdots,0\right)
$. Then $\alpha=v_{1}$ and
\[
f_{\vec{e}}\left(  v_{1}\right)  =\int_{\mathbf{R}^{d-1}}f\left(  v\right)
\ dv_{2}\cdots dv_{d}.
\]
Let $\xi=\left(  \xi_{1},\cdots\xi_{d}\right)  $ be the Fourier variable.
Then
\begin{align*}
\left\Vert f_{\vec{e}}\left(  v_{1}\right)  \right\Vert _{H^{s}\left(
\mathbf{R}\right)  }  &  =\left\Vert \left(  1+\left\vert \xi_{1}\right\vert
^{2}\right)  ^{\frac{s}{2}}\hat{f}_{\vec{e}}\left(  \xi_{1}\right)
\right\Vert _{L^{2}\left(  \mathbf{R}\right)  }\\
&  =\left\Vert \left(  1+\left\vert \xi_{1}\right\vert ^{2}\right)  ^{\frac
{s}{2}}\hat{f}\left(  \xi_{1},0,\cdots,0\right)  \right\Vert _{L^{2}\left(
\mathbf{R}\right)  }\\
&  \leq C\left\Vert \left(  1+\left\vert \xi\right\vert ^{2}\right)
^{\frac{s}{2}}\hat{f}\left(  \xi\right)  \right\Vert _{H^{2b}\left(
\mathbf{R}^{d}\right)  }\text{ }\\
&  =C\left\Vert \left(  1+\left\vert v\right\vert ^{2}\right)  ^{b}\left(
1-\Delta\right)  ^{\frac{s}{2}}f\right\Vert _{L^{2}\left(  \mathbf{R}%
^{d}\right)  }=C\left\Vert f\right\Vert _{H^{s,b}\left(  \mathbf{R}%
^{d}\right)  }.
\end{align*}
Here, the first inequality above is due to the trace theorem and that
$2b>\frac{d-1}{2}.$
\end{proof}

\begin{corollary}
\label{cor-inequality}Given $f\left(  v\right)  \in H^{s,b}\left(
\mathbf{R}^{d}\right)  \ \ \left(  d\geq2,\ s>\frac{3}{2},\ b>\frac{d-1}%
{4}\right)  .$ For any unit vector $\vec{e}\in\mathbf{R}^{d}$, we have

(i) If $\alpha_{0}$ is a critical point of $f_{\vec{e}}\left(  \alpha\right)
$, then
\[
\int_{\mathbf{R}}\left\vert \frac{f_{\vec{e}}^{\prime}\left(  \alpha\right)
}{\alpha-\alpha_{0}}\right\vert d\alpha\leq C\left(  d,s,b\right)  \left\Vert
f\right\Vert _{H^{s,b}\left(  \mathbf{R}^{d}\right)  }.
\]

(ii) For any $\alpha^{\prime}\in\mathbf{R}$,
\[
\left\vert P\int_{\mathbf{R}}\frac{f_{\vec{e}}^{\prime}\left(  \alpha\right)
}{\alpha-\alpha^{\prime}}d\alpha\right\vert \leq C\left(  d,s,b\right)
\left\Vert f\right\Vert _{H^{s,b}\left(  \mathbf{R}^{d}\right)  },
\]
where $P\int_{\mathbf{R}}$ is the principal value integral.
\end{corollary}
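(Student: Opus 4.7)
The plan is to combine Lemma \ref{lemma-proj-inequality}, which converts the weighted $d$-dimensional hypothesis into one-dimensional $H^{s}$ control on the marginal $f_{\vec{e}}$, with one-dimensional Sobolev embedding for the derivative.

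First I would apply Lemma \ref{lemma-proj-inequality} to obtain $f_{\vec{e}}\in H^{s}(\mathbf{R})$ with $\|f_{\vec{e}}\|_{H^{s}(\mathbf{R})}\le C\|f\|_{H^{s,b}(\mathbf{R}^{d})}$. Since $s>\tfrac{3}{2}$, the one-dimensional Sobolev embedding gives $f'_{\vec{e}}\in H^{s-1}(\mathbf{R})\hookrightarrow L^{2}(\mathbf{R})\cap C^{0,\beta}(\mathbf{R})$ for some $\beta>0$ (for instance $\beta=\min(s-\tfrac{3}{2},\tfrac{1}{2})$), with every norm dominated by a constant times $\|f\|_{H^{s,b}}$.

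For (i), the critical point assumption $f'_{\vec{e}}(\alpha_{0})=0$ is the key. I would split the integral at $|\alpha-\alpha_{0}|=1$. On the inner region, H\"older continuity together with $f'_{\vec{e}}(\alpha_{0})=0$ gives $|f'_{\vec{e}}(\alpha)|\le C\|f\|_{H^{s,b}}|\alpha-\alpha_{0}|^{\beta}$, so the integrand is bounded by $C|\alpha-\alpha_{0}|^{\beta-1}$, which is integrable since $\beta>0$. On the outer region, Cauchy--Schwarz gives
\[
\int_{|\alpha-\alpha_{0}|>1}\frac{|f'_{\vec{e}}(\alpha)|}{|\alpha-\alpha_{0}|}\,d\alpha \le \|f'_{\vec{e}}\|_{L^{2}}\left(\int_{|\alpha-\alpha_{0}|>1}\frac{d\alpha}{(\alpha-\alpha_{0})^{2}}\right)^{1/2}\le C\|f\|_{H^{s,b}}.
\]

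For (ii), I would split at $|\alpha-\alpha'|=1$ in the same fashion. The outer piece is handled identically by Cauchy--Schwarz. For the principal-value piece, I would use the symmetry $P\!\int_{|\alpha-\alpha'|\le 1}\frac{d\alpha}{\alpha-\alpha'}=0$ to rewrite
\[
P\!\int_{|\alpha-\alpha'|\le 1}\frac{f'_{\vec{e}}(\alpha)}{\alpha-\alpha'}\,d\alpha = \int_{|\alpha-\alpha'|\le 1}\frac{f'_{\vec{e}}(\alpha)-f'_{\vec{e}}(\alpha')}{\alpha-\alpha'}\,d\alpha,
\]
after which H\"older continuity bounds the integrand by $C\|f\|_{H^{s,b}}|\alpha-\alpha'|^{\beta-1}$, again integrable. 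Essentially no further ingredients are required; the only delicate point is the positivity of the H\"older exponent $\beta$, which is exactly what the hypothesis $s>\tfrac{3}{2}$ provides via Sobolev embedding applied to $f'_{\vec{e}}$, so I do not expect any genuine obstacle.
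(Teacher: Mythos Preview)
Your argument is correct. Both you and the paper begin with Lemma~\ref{lemma-proj-inequality} to reduce to a one-dimensional statement about $f_{\vec e}'\in H^{s-1}(\mathbf R)$ with $s-1>\tfrac12$; the difference lies only in how that one-dimensional step is executed. The paper simply quotes a Hardy-type inequality (from \cite{lin-zeng-bgk}): if $u\in W^{\sigma,p}(\mathbf R)$ with $\sigma>1/p$ and $u(0)=0$, then $\int|u(v)/v|\,dv\le C\|u\|_{W^{\sigma,p}}$, and applies it with $u=f_{\vec e}'(\cdot+\alpha_0)$ for (i), and with the symmetrized function $u(\alpha)=\tfrac{d}{d\alpha}\bigl(f_{\vec e}(\alpha+\alpha')+f_{\vec e}(-\alpha+\alpha')\bigr)$ (which automatically vanishes at $0$) for (ii). Your near/far split together with the H\"older estimate from $H^{s-1}\hookrightarrow C^{0,\beta}$ is in effect a direct proof of that same Hardy inequality, and your subtraction of $f_{\vec e}'(\alpha')$ on the inner piece in (ii) plays the same role as the paper's symmetrization. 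So the two routes are equivalent in substance; yours is slightly more self-contained, while the paper's is shorter by outsourcing the one-dimensional estimate.
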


\begin{proof}
(i) follows from Lemma \ref{lemma-proj-inequality} and the following Hardy
inequality (see Lemma 3.1 of \cite{lin-zeng-bgk}): If $u\left(  v\right)  \in
W^{s,p}\left(  \mathbf{R}\right)  $ $\left(  p>1,s>\frac{1}{p}\right)  ,\ $and
$u\left(  0\right)  =0,\ $then%
\begin{equation}
\int_{\mathbf{R}}\left\vert \frac{u\left(  v\right)  }{v}\right\vert dv\leq
C\left\Vert u\right\Vert _{W^{s,p}\left(  \mathbf{R}\right)  },
\label{hardy-inequality}%
\end{equation}
for some constant $C$.

Proof of (ii): Since
\[
P\int_{\mathbf{R}}\frac{f_{\vec{e}}^{\prime}\left(  \alpha\right)  }%
{\alpha-\alpha^{\prime}}d\alpha=\frac{1}{2}\int_{\mathbf{R}}\frac{\frac
{d}{d\alpha}\left(  f_{\vec{e}}\left(  \alpha+\alpha^{\prime}\right)
+f_{\vec{e}}\left(  -\alpha+\alpha^{\prime}\right)  \right)  }{\alpha}%
d\alpha,
\]
by Hardy inequality (\ref{hardy-inequality})
\begin{align*}
\left\vert P\int_{\mathbf{R}}\frac{f_{\vec{e}}^{\prime}\left(  \alpha\right)
}{\alpha-\alpha^{\prime}}d\alpha\right\vert  &  \leq C\left(  \left\Vert
f_{\vec{e}}\left(  \alpha+\alpha^{\prime}\right)  \right\Vert _{H^{s}\left(
\mathbf{R}\right)  }+\left\Vert f_{\vec{e}}\left(  -\alpha+\alpha^{\prime
}\right)  \right\Vert _{H^{s}\left(  \mathbf{R}\right)  }\right) \\
&  \leq C\left\Vert f_{\vec{e}}\left(  \alpha\right)  \right\Vert
_{H^{s}\left(  \mathbf{R}\right)  }\leq C\left\Vert f\right\Vert
_{H^{s,b}\left(  \mathbf{R}^{d}\right)  }\text{ (by Lemma
\ref{lemma-proj-inequality}).}%
\end{align*}

\end{proof}

Next we derive the linear decay estimate in higher dimensions. We start with a
generalization of Penrose's linear stability condition: Given $f_{0}\left(
v\right)  \in H^{s,b}\left(  \mathbf{R}^{d}\right)  $ $\left(  d\geq
2,\ s>\frac{3}{2},\ b>\frac{d-1}{4}\right)  ,$%

\begin{equation}
\left\vert \vec{k}\right\vert ^{2}-\max_{v_{i}\in S_{\vec{k}/\left\vert
\vec{k}\right\vert }}\int_{\mathbf{R}}\frac{f_{0,\vec{k}/\left\vert \vec
{k}\right\vert }^{\prime}\left(  \alpha\right)  }{\alpha-v_{i}}d\alpha
>0,\ \text{for any }\vec{k}\in Z^{d}, \label{penrose-condition}%
\end{equation}
where $f_{0,\vec{k}/\left\vert \vec{k}\right\vert }^{\prime}\left(
\alpha\right)  $ is defined by (\ref{defn-f-e-arrow}) and $S_{\vec
{k}/\left\vert \vec{k}\right\vert }$ is the set of all critical points of
$f_{0,\vec{k}/\left\vert \vec{k}\right\vert }\left(  \alpha\right)  .$

\begin{remark}
By Corollary \ref{cor-inequality}, one only need to check the stability
condition (\ref{penrose-condition}) for finitely many $\vec{k}\in Z^{d}$
satisfying that
\[
\left\vert \vec{k}\right\vert ^{2}\leq C\left(  d,s,b\right)  \left\Vert
f_{0}\right\Vert _{H^{s,b}\left(  \mathbf{R}^{d}\right)  }.
\]
In particular, for a single humped isentropic profile $f_{0}\left(  v\right)
=\mu\left(  \frac{1}{2}\left\vert v\right\vert ^{2}\right)  $ with
$\mu^{\prime}\left(  e\right)  <0$, the stability condition
(\ref{penrose-condition}) is satisfied for any period set $\left(
T_{1},\cdots,T_{d}\right)  .$
\end{remark}

The next lemma is the linear decay estimate in a space-time norm, which
generalizes the one dimensional result in \cite{lin-zeng-bgk}. The linearized
Vlasov-Poisson system at an homogeneous state $\left(  f,\vec{E}\right)
=\left(  f_{0}\left(  v\right)  ,\vec{0}\right)  $ is
\begin{subequations}
\begin{equation}
\partial_{t}f+v\cdot\nabla_{x}f-\vec{E}\cdot\nabla_{v}f_{0}=0,
\label{L-Vlasov}%
\end{equation}%
\begin{equation}
\,\,\,\,\vec{E}=-\nabla_{x}\phi,\ \ -\Delta\phi=-\int_{\mathbf{R}^{d}}f\ dv,
\label{L-Poisson}%
\end{equation}

\end{subequations}
\begin{lemma}
\label{lemma-estimate-linear}Assume $f_{0}\left(  v\right)  \in H^{s_{0}%
,b}\left(  \mathbf{R}^{d}\right)  $ $\left(  d\geq2,\ s_{0}>\frac{3}%
{2},\ b>\frac{d-1}{4}\right)  \ $and the Penrose stability condition
(\ref{penrose-condition}) is satisfied for $x-$period tuple $\left(
T_{1},\cdots,T_{d}\right)  $. Let $\left(  f\left(  x,v,t\right)  ,\vec
{E}\left(  x,t\right)  \right)  $ be a solution of the linearized
Vlasov-Poisson system (\ref{L-Vlasov})-(\ref{L-Poisson}) with $x-$period tuple
$\left(  T_{1},\cdots,T_{d}\right)  $. If$\ g\in H_{x}^{s_{x}}H_{v}^{s_{v},b}$
with $\left\vert s_{v}\right\vert \leq s_{0}-1,$ then%
\begin{equation}
\left\Vert t^{s_{v}}\vec{E}\left(  x,t\right)  \right\Vert _{L_{t}^{2}%
H_{x}^{\frac{3}{2}+s_{x}+s_{v}}}\leq C_{0}\left\Vert f\left(  x,v,0\right)
\right\Vert _{H_{x}^{s_{x}}H_{v}^{s_{v},b}}, \label{estimate-linear-integral}%
\end{equation}

\end{lemma}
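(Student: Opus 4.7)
The plan is to take a Fourier series in $x$ and reduce the linearized system (\ref{L-Vlasov})-(\ref{L-Poisson}) to a decoupled family of Volterra integral equations, one for each mode $\vec{k}\in \mathbf{Z}^{d}$. Writing $f(x,v,t)=\sum_{\vec{k}} e^{i\vec{k}\cdot x} f_{\vec{k}}(v,t)$ and $\rho_{\vec{k}}(t)=\int f_{\vec{k}}(v,t)\,dv$, Poisson's equation gives $\vec{E}_{\vec{k}} = -i\vec{k}\,\rho_{\vec{k}}/|\vec{k}|^{2}$ for $\vec{k}\neq\vec{0}$, while integrating the Vlasov equation along the free streaming characteristics and then integrating by parts in $v$ yields
\[
\rho_{\vec{k}}(t) = r_{\vec{k}}(t) + \int_{0}^{t} (t-s)\,\hat{f}_{0}\bigl(\vec{k}(t-s)\bigr)\,\rho_{\vec{k}}(s)\,ds, \qquad r_{\vec{k}}(t) = \int e^{-i\vec{k}\cdot v t}\, f_{\vec{k}}(v,0)\,dv.
\]

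The key observation is that, setting $\vec{e}=\vec{k}/|\vec{k}|$, both $r_{\vec{k}}(t)$ and the Volterra kernel depend on $v$ only through $\alpha = \vec{e}\cdot v$, so they are one-dimensional Fourier transforms (in $|\vec{k}|t$) of the projected profiles $(f_{\vec{k}}(\cdot,0))_{\vec{e}}$ and $f_{0,\vec{e}}$ of (\ref{defn-f-e-arrow}). Hence each mode collapses to the 1D linearized Vlasov-Poisson problem at wave number $|\vec{k}|$ with equilibrium $f_{0,\vec{e}}$, for which the generalized Penrose condition (\ref{penrose-condition}) is exactly the 1D Penrose criterion. Together with the remark following Corollary \ref{cor-inequality}, which forces only finitely many $\vec{k}$ to be checked, this provides a uniform lower bound on $|1-\tilde{K}_{\vec{k}}(\lambda)|$ on the closed right half plane, uniform over all $\vec{k}\in \mathbf{Z}^{d}$.

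Applying the 1D linear decay estimate from \cite{lin-zeng-bgk} to each mode (with the $b$-weighted projection estimate of Lemma \ref{lemma-proj-inequality} replacing the 1D Sobolev bound on the data) and rescaling $\tau=|\vec{k}|t$ gives
\[
\|t^{s_{v}}\rho_{\vec{k}}\|_{L^{2}_{t}}^{2} \leq C|\vec{k}|^{-1-2s_{v}}\,\|(f_{\vec{k}}(\cdot,0))_{\vec{e}}\|_{H^{s_{v}}(\mathbf{R})}^{2} \leq C|\vec{k}|^{-1-2s_{v}}\,\|f_{\vec{k}}(\cdot,0)\|_{H^{s_{v},b}_{v}}^{2}.
\]
Since $|\vec{E}_{\vec{k}}|=|\rho_{\vec{k}}|/|\vec{k}|$, the $H^{3/2+s_{x}+s_{v}}_{x}$ norm of $\vec{E}$ weights the $\vec{k}$-th mode by $|\vec{k}|^{1+2s_{x}+2s_{v}}$. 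Multiplying through and summing in $\vec{k}$ using Plancherel in $x$ absorbs the $|\vec{k}|^{-1-2s_{v}}$ factor against $|\vec{k}|^{1+2s_{x}+2s_{v}}$, producing precisely $\sum_{\vec{k}}|\vec{k}|^{2s_{x}}\|f_{\vec{k}}(\cdot,0)\|_{H^{s_{v},b}_{v}}^{2} = \|f(\cdot,\cdot,0)\|_{H^{s_{x}}_{x}H^{s_{v},b}_{v}}^{2}$, which is (\ref{estimate-linear-integral}).

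The main obstacle is the careful handling of the $t^{s_{v}}$ weight within the Volterra inversion for fractional or negative $s_{v}$. On the Fourier side in $t$, this weight becomes an $|s_{v}|$-order differentiation acting on $(1-\tilde{K}_{\vec{k}})^{-1}\,\tilde{r}_{\vec{k}}$, and one must show that derivatives falling on $\tilde{K}_{\vec{k}}$ are controlled by the $H^{s_{0},b}$ regularity of $f_{0}$ (through the scaling $\tilde{K}_{\vec{k}}(\lambda)=\tilde{K}_{1}(\lambda/|\vec{k}|)$ with $\tilde{K}_{1}$ built from $f_{0,\vec{e}}$), uniformly in $\vec{k}$, while derivatives falling on $\tilde{r}_{\vec{k}}$ are absorbed by the hypothesis $|s_{v}|\leq s_{0}-1$ via Lemma \ref{lemma-proj-inequality}. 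This commutator-type argument is the technical heart that lifts the pointwise Penrose bound to the weighted space-time norm.
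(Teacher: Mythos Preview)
Your proposal is correct and follows essentially the same route as the paper: Fourier-decompose in $x$, observe that for each mode $\vec{k}$ the problem reduces, via the projection $f\mapsto f_{\vec{e}}$ with $\vec{e}=\vec{k}/|\vec{k}|$, to the 1D linearized Vlasov--Poisson system at equilibrium $f_{0,\vec{e}}$ and wave number $|\vec{k}|$, invoke the 1D estimate of \cite{lin-zeng-bgk} together with the uniform-in-$\vec{k}$ Penrose lower bound coming from Corollary~\ref{cor-inequality}, control the projected data by Lemma~\ref{lemma-proj-inequality}, and resum. The only cosmetic difference is that you write the Volterra equation for $\rho_{\vec{k}}$ and then pass to its Fourier/Laplace side, whereas the paper quotes directly the representation formula $\tilde{E}_{\vec{k}}(t)=\frac{|\vec{k}|}{2\pi}\int \frac{G_{\vec{k}}(y+i0)}{|\vec{k}|^{2}-F_{\vec{e}}(y+i0)}\,e^{-i|\vec{k}|yt}\,dy$ from \cite{lin-zeng-bgk}; these are the same object, and your ``main obstacle'' paragraph is precisely what the paper packages into ``by the same proof of Proposition~4.1 in \cite{lin-zeng-bgk}''.
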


\begin{proof}
First, we reduce the linearized problem to the one dimensional case. Since the
homogeneous component of $f\left(  x,v,t\right)  \ $remains steady for the
linearized equation and therefore has no effect on $\vec{E}\left(  x,t\right)
$, we assume that $f$ has no homogeneous component. Let
\[
f\left(  x,v,t\right)  =\sum_{\vec{0}\neq\vec{k}\in\mathbf{Z}^{d}}e^{i\vec
{k}\cdot x}f_{\vec{k}}\left(  v,t\right)
\]
and the electric potential
\[
\phi\left(  x,t\right)  =\sum_{\vec{0}\neq\vec{k}\in\mathbf{Z}^{d}}e^{i\vec
{k}\cdot x}\phi_{\vec{k}}\left(  t\right)  .
\]
Then
\[
\vec{E}\left(  x,t\right)  =-\nabla_{x}\phi=-\sum_{\vec{0}\neq\vec{k}%
\in\mathbf{Z}^{d}}i\vec{k}\phi_{\vec{k}}\left(  t\right)  e^{i\vec{k}\cdot
x}=\sum_{\vec{0}\neq\vec{k}\in\mathbf{Z}^{d}}\vec{E}_{\vec{k}}\left(
t\right)  e^{i\vec{k}\cdot x},
\]
where $\vec{E}_{\vec{k}}\left(  t\right)  =-i\vec{k}\phi_{\vec{k}}\left(
t\right)  $. Denote $\vec{e}=\vec{k}/\left\vert \vec{k}\right\vert $, then
\[
\vec{E}_{\vec{k}}\left(  t\right)  =-i\vec{k}\phi_{\vec{k}}\left(  t\right)
=\tilde{E}_{\vec{k}}\left(  t\right)  \vec{e}%
\]
where $\tilde{E}_{\vec{k}}\left(  t\right)  =-i\left\vert \vec{k}\right\vert
\phi_{\vec{k}}\left(  t\right)  $. Let $v=\alpha\vec{e}+w$ where $\alpha
\in\mathbf{R,\ }w\perp\vec{e}\,$, and
\[
\tilde{f}_{\vec{k}}\left(  \alpha,t\right)  =f_{\vec{k},\vec{e}}\left(
\alpha,t\right)  =\int_{\mathbf{R}^{d-1}}f_{\vec{k}}\left(  \alpha\vec
{e}+w,t\right)  \ dw
\]
The linearized Vlasov equation implies that
\begin{align*}
0  &  =\partial_{t}f_{\vec{k}}+v\cdot i\vec{k}\ f_{\vec{k}}-\vec{E}_{\vec{k}%
}\cdot\nabla_{v}f_{0}\\
&  =\partial_{t}f_{\vec{k}}+i\alpha\left\vert \vec{k}\right\vert f_{\vec{k}%
}-\tilde{E}_{k}\partial_{\alpha}f_{0}%
\end{align*}
An integration of the $w$ variable on above equation yields%
\begin{equation}
\partial_{t}\tilde{f}_{\vec{k}}\left(  \alpha,t\right)  +i\alpha\left\vert
\vec{k}\right\vert \tilde{f}_{\vec{k}}\left(  \alpha,t\right)  -\tilde{E}%
_{k}f_{0,\vec{e}}^{\prime}\left(  \alpha\right)  =0. \label{L-Vlasov-k}%
\end{equation}
The Poisson equation implies%
\[
\left\vert \vec{k}\right\vert ^{2}\phi_{\vec{k}}\left(  t\right)
=-\int_{\mathbf{R}^{d}}f_{\vec{k}}\left(  v,t\right)  dv,
\]
and thus
\begin{equation}
i\left\vert \vec{k}\right\vert \tilde{E}_{\vec{k}}\left(  t\right)
=-\int_{\mathbf{R}}\tilde{f}_{\vec{k}}\left(  \alpha,t\right)  d\alpha.
\label{L-Poisson-k}%
\end{equation}
Equations (\ref{L-Vlasov-k}) and (\ref{L-Poisson-k}) imply that $\left(
\tilde{f}_{\vec{k}}\left(  \alpha,t\right)  ,\tilde{E}_{k}\left(  t\right)
\right)  e^{i\left\vert \vec{k}\right\vert x}$ solves the linearized $1$D
Vlasov-Poisson equations at the homogeneous profile $f_{0,\vec{e}}\left(
\alpha\right)  $. Thus by the $1$D representation formula in
\cite{lin-zeng-bgk} and the Penrose stability condition
(\ref{penrose-condition}), we have
\[
\tilde{E}_{\vec{k}}\left(  t\right)  =\frac{\left\vert \vec{k}\right\vert
}{2\pi}\int_{\mathbf{R}}\frac{G_{\vec{k}}\left(  y+i0\right)  }{\left\vert
\vec{k}\right\vert ^{2}-F_{\vec{e}}\left(  y+i0\right)  }e^{-i\left\vert
\vec{k}\right\vert yt}dy.
\]
Here,
\[
G_{\vec{k}}\left(  y+i0\right)  =P\int_{\mathbf{R}}\frac{\tilde{f}_{\vec{k}%
}\left(  \alpha,0\right)  }{\alpha-y}d\alpha+i\pi\tilde{f}_{\vec{k}}\left(
y,0\right)
\]
and
\[
F_{\vec{e}}\left(  y+i0\right)  =P\int_{\mathbf{R}}\frac{f_{0,\vec{e}}%
^{\prime}\left(  \alpha\right)  }{\alpha-y}d\alpha+i\pi f_{0,\vec{e}}^{\prime
}\left(  y\right)  .
\]
By the Penrose stability condition (\ref{penrose-condition}) and
\[
\left\vert P\int_{\mathbf{R}}\frac{f_{0,\vec{e}}^{\prime}\left(
\alpha\right)  }{\alpha-y}d\alpha\right\vert \leq C\left(  d,s,b\right)
\left\Vert f_{0}\right\Vert _{H^{s,b}\left(  \mathbf{R}^{d}\right)  }\text{
(Corollary \ref{cor-inequality})},
\]
there exists $c_{0}>0$ (independent of $\vec{k}$), such that
\[
\left\vert \left\vert \vec{k}\right\vert ^{2}-F_{\vec{e}}\left(  y+i0\right)
\right\vert ^{2}\geq c_{0}\left\vert \vec{k}\right\vert ^{2}.
\]
Then by the same proof of Proposition 4.1 in \cite{lin-zeng-bgk},
\begin{align*}
\left\Vert t^{s_{v}}\tilde{E}_{\vec{k}}\left(  t\right)  \right\Vert _{L^{2}%
}^{2}  &  \leq C\left\vert \vec{k}\right\vert ^{-3-2s_{v}}\left\Vert \tilde
{f}_{\vec{k}}\left(  \alpha,0\right)  \right\Vert _{H_{v}^{s_{v}}}^{2}\\
&  \leq C\left\vert \vec{k}\right\vert ^{-3-2s_{v}}\left\Vert f_{\vec{k}%
}\left(  v,0\right)  \right\Vert _{H^{s,b}\left(  \mathbf{R}^{d}\right)  }%
^{2}.
\end{align*}
So
\begin{align*}
&  \ \ \ \ \ \left\Vert t^{s_{v}}\vec{E}\left(  x,t\right)  \right\Vert
_{L_{t}^{2}H_{x}^{\frac{3}{2}+s_{x}+s_{v}}}^{2}\\
&  =\sum_{0\neq\vec{k}\in\mathbf{Z}^{d}}\left\vert \vec{k}\right\vert
^{3+2s_{v}+2s_{x}}\left\Vert t^{s_{v}}\tilde{E}_{\vec{k}}\left(  t\right)
\right\Vert _{L^{2}}^{2}\\
&  \leq C\sum_{0\neq\vec{k}\in\mathbf{Z}^{d}}\left\vert \vec{k}\right\vert
^{2s_{x}}\left\Vert f_{\vec{k}}\left(  v,0\right)  \right\Vert _{H^{s,b}%
\left(  \mathbf{R}^{d}\right)  }^{2}\\
&  \leq C\left\Vert f\left(  x,v,0\right)  \right\Vert _{H_{x}^{s_{x}}%
H_{v}^{s_{v},b}}.
\end{align*}

\end{proof}

\begin{lemma}
\label{lemma-estimate-integral-small}Assume $f_{0}\left(  v\right)  \in
H^{s_{0},b}\left(  \mathbf{R}^{d}\right)  $ $\left(  d\geq2,\ s_{0}>\frac
{3}{2},\ b>\frac{d-1}{4}\right)  \ $and the Penrose stability condition
(\ref{penrose-condition}) is satisfied for $x-$period tuple $\left(
T_{1},\cdots,T_{d}\right)  $. Let $\left(  f\left(  x,v,t\right)  ,\vec
{E}\left(  x,t\right)  \right)  $ be a solution of the Vlasov-Poisson system
(\ref{vlasov})-(\ref{poisson}) with $x-$period tuple $\left(  T_{1}%
,\cdots,T_{d}\right)  $.

For any $\left(  s_{x},s_{v}\right)  $ satisfying
(\ref{condition-non-existence}),$\ $there exists $\varepsilon_{0}>0$, such
that if
\[
\left\Vert f\left(  t\right)  -f_{0}\right\Vert _{H_{x}^{s_{x}}H_{v}^{s_{v}%
,b}}<\varepsilon_{0},\ \text{for\ all\ }t\geq0,
\]
then%
\begin{equation}
\left\Vert \left(  1+t\right)  ^{s_{v}-1}\vec{E}\left(  x,t\right)
\right\Vert _{L_{\left\{  t\geq0\right\}  }^{2}H_{x}^{\frac{3}{2}+s_{x}}}\leq
C\varepsilon_{0}. \label{estimate-stability-integral-3d}%
\end{equation}

\end{lemma}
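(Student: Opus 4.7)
The plan is to regard the nonlinear Vlasov--Poisson system as the linearization around $f_0$ driven by a quadratic source, and then close a bootstrap using Lemma \ref{lemma-estimate-linear}. Setting $g = f - f_0$, equation (\ref{vlasov}) becomes
\begin{equation*}
\partial_t g + v\cdot\nabla_x g - \vec{E}\cdot\nabla_v f_0 = \vec{E}\cdot\nabla_v g =: N.
\end{equation*}
Let $S(t)$ denote the linear Vlasov--Poisson semigroup around $(f_0, \vec 0)$. Duhamel's formula gives
$$g(t) = S(t)g(0) + \int_0^t S(t-\tau) N(\tau)\,d\tau,$$
and the corresponding electric field splits as $\vec{E} = \vec{E}^L + \vec{E}^{NL}$.

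For the linear piece $\vec{E}^L$, I would apply Lemma \ref{lemma-estimate-linear} twice: once with exponent $s_v' = 0$ to control the region near $t=0$, and once with $s_v' = s_v - 1 \in (1/2,\, s_0 - 1]$ (admissible since $s_v > \tfrac{3}{2}$) for the tail; combining via the embedding $H_x^{3/2 + s_x + s_v - 1} \hookrightarrow H_x^{3/2 + s_x}$ yields
$$\|(1+t)^{s_v - 1}\vec{E}^L\|_{L^2_t H_x^{3/2 + s_x}} \leq C\|g(0)\|_{H_x^{s_x} H_v^{s_v - 1, b}} \leq C\varepsilon_0.$$
For $\vec{E}^{NL}$, I would first extend Lemma \ref{lemma-estimate-linear} by time-translation so that it applies to the linear response $S(t - \tau) N(\tau)$ on $t \geq \tau$. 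Then I split the weight via the elementary pointwise bound
$$(1 + t)^{s_v - 1} \leq C\bigl((1 + t - \tau)^{s_v - 1} + (1 + \tau)^{s_v - 1}\bigr) \qquad (0 \leq \tau \leq t),$$
valid since $s_v - 1 > 0$, and use Minkowski together with Young's convolution inequality in the time variable to reduce matters to bounding the source $N(\tau)$ in $L^2_\tau\bigl(H_x^{s_x} H_v^{s_v - 1, b}\bigr)$ and $L^\infty_\tau$ of the same space.

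The source estimate is the product bound
$$\|\vec{E}(\tau) \cdot \nabla_v g(\tau)\|_{H_x^{s_x} H_v^{s_v - 1, b}} \leq C\|\vec{E}(\tau)\|_{H_x^{3/2 + s_x}} \|g(\tau)\|_{H_x^{s_x} H_v^{s_v, b}},$$
which follows from the Sobolev multiplication $H_x^{3/2 + s_x} \cdot H_x^{s_x} \hookrightarrow H_x^{s_x}$; this is exactly where the hypothesis $s_x > \tfrac{d-3}{2}$ (equivalently $\tfrac{3}{2} + s_x > \tfrac{d}{2}$) enters. Using the uniform-in-time smallness $\|g(\tau)\|_{H_x^{s_x} H_v^{s_v, b}} < \varepsilon_0$, the nonlinear contribution satisfies
$$\|(1+t)^{s_v - 1}\vec{E}^{NL}\|_{L^2_t H_x^{3/2 + s_x}} \leq C\varepsilon_0\,\|(1+\tau)^{s_v - 1}\vec{E}\|_{L^2_\tau H_x^{3/2 + s_x}}.$$
Choosing $\varepsilon_0$ small enough absorbs the right-hand side into the left, giving (\ref{estimate-stability-integral-3d}). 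The principal obstacle is the time-weighted Duhamel bookkeeping: arranging the split of $(1+t)^{s_v - 1}$ and the application of Young's inequality so that Lemma \ref{lemma-estimate-linear} can be invoked cleanly on both the translated semigroup and the source, while simultaneously reconciling the two different weights $t^{s_v - 1}$ (from the linear estimate) and $(1+t)^{s_v - 1}$ (demanded by the conclusion) near $t = 0$.
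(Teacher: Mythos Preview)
Your overall plan matches the paper's: set $g=f-f_0$, decompose via Duhamel into $\vec E=\vec E^{L}+\vec E^{NL}$, invoke Lemma~\ref{lemma-estimate-linear} on the linear piece, use the product bound
\[
\|\vec E\cdot\nabla_v g\|_{H_x^{s_x}H_v^{s_v-1,b}}\le C\|\vec E\|_{H_x^{3/2+s_x}}\|g\|_{H_x^{s_x}H_v^{s_v,b}}
\]
(this is exactly where $s_x>\tfrac{d-3}{2}$ enters, as you correctly identify), and close by smallness of $\varepsilon_0$. Your handling of $\vec E^{L}$ via two applications of Lemma~\ref{lemma-estimate-linear} is also what the paper does, though the paper packages this into the single statement~(\ref{estimate-linear-notation}) without spelling out the two cases.

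The gap is in the nonlinear time bookkeeping. The Duhamel term $\int_0^t \mathcal{E}\bigl[S(t-\tau)N(\tau)\bigr]\,d\tau$ is \emph{not} a convolution: for each fixed $\tau$, Lemma~\ref{lemma-estimate-linear} only gives the $L^2_s$ bound $\|(1+s)^{s_v-1}\mathcal{E}[S(s)N(\tau)]\|_{L^2_s H_x^{3/2+s_x}}\le C\|N(\tau)\|$, not a pointwise kernel $K(s)$, so Young's inequality has nothing to act on. If after your additive split of $(1+t)^{s_v-1}$ you apply Minkowski in $t$, you land on $\int_0^\infty (1+\tau)^{s_v-1}\|N(\tau)\|\,d\tau$, an $L^1_\tau$ quantity; converting this to the weighted $L^2_\tau$ norm of $\vec E$ by Cauchy--Schwarz introduces a divergent factor and the bootstrap does not close.

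The paper sidesteps this by applying Cauchy--Schwarz \emph{inside} the $\tau$-integral with the symmetric weights $(1+t-\tau)^{\pm(s_v-1)}(1+\tau)^{\pm(s_v-1)}$, and then invoking the elementary convolution bound
\[
\int_0^t (1+t-\tau)^{-2(s_v-1)}(1+\tau)^{-2(s_v-1)}\,d\tau\le C\,(1+t)^{-2(s_v-1)},
\]
valid precisely because $2(s_v-1)>1$. After this, Fubini and the linear estimate~(\ref{estimate-linear-notation}) in the inner $t$-variable yield directly
\[
\|(1+t)^{s_v-1}\vec E^{NL}\|_{L^2_t H_x^{3/2+s_x}}^2\le C\int_0^\infty (1+\tau)^{2(s_v-1)}\|N(\tau)\|^2\,d\tau\le C\varepsilon_0^2\,\|(1+\tau)^{s_v-1}\vec E\|_{L^2_\tau H_x^{3/2+s_x}}^2,
\]
which closes. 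Replace your Young/Minkowski step by this Cauchy--Schwarz--Fubini argument and the rest of your plan goes through verbatim.
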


\begin{proof}
Denote $L_{0}$ to be the linearized operator corresponding to the linearized
Vlasov-Poisson equation at $\left(  f_{0}\left(  v\right)  ,0\right)  $, and
$\mathcal{E}$ is the mapping from $f\left(  x,v\right)  $ to $\vec{E}\left(
x\right)  $ by the Poisson equation (\ref{L-Poisson})$.$It follows from Lemma
\ref{lemma-estimate-linear} that: For any $0\leq s_{v}\leq s_{0}-1,\ $if
$h\left(  x,v\right)  \in H_{x}^{s_{x}}H_{v}^{s_{v},b},$then
\begin{equation}
\left\Vert \left(  1+t\right)  ^{s_{v}}\mathcal{E}\left(  e^{tL_{0}}h\right)
\right\Vert _{L_{t}^{2}H_{x}^{\frac{3}{2}+s_{x}}}\leq C\left\Vert h\left(
x,v\right)  \right\Vert _{H_{x}^{s_{x}}H_{v}^{s_{v},b}}.
\label{estimate-linear-notation}%
\end{equation}
Denote $f_{1}\left(  t\right)  =f\left(  t\right)  -f_{0}$, then
\[
\partial_{t}f_{1}=L_{0}f_{1}+\vec{E}\cdot\partial_{v}f_{1}.
\]
Thus
\[
f_{1}\left(  t\right)  =e^{tL_{0}}f_{1}\left(  0\right)  +\int_{0}%
^{t}e^{\left(  t-u\right)  L_{0}}\left(  \vec{E}\cdot\partial_{v}f_{1}\right)
\left(  u\right)  du=f_{\text{lin}}\left(  t\right)  +f_{\text{non}}\left(
t\right)  ,
\]
and correspondingly
\[
\vec{E}\left(  t\right)  =\mathcal{E}\left(  f_{\text{lin}}\left(  t\right)
\right)  +\mathcal{E}\left(  f_{\text{non}}\left(  t\right)  \right)  =\vec
{E}_{\text{lin}}\left(  t\right)  +\vec{E}_{\text{non}}\left(  t\right)  .
\]
By the linear estimate (\ref{estimate-linear-notation}),%
\begin{align*}
\left\Vert \left(  1+t\right)  ^{s_{v}-1}\vec{E}_{\text{lin}}\left(
x,t\right)  \right\Vert _{L_{\left\{  t\geq0\right\}  }^{2}H_{x}^{\frac{3}%
{2}+s_{x}}}  &  =\left\Vert \left(  1+t\right)  ^{s_{v}-1}\mathcal{E}\left(
e^{tL_{0}}f_{1}\left(  0\right)  \right)  \right\Vert _{L_{t}^{2}H_{x}%
^{\frac{3}{2}+s_{x}}}\\
&  \leq C\left\Vert f_{1}\left(  0\right)  \right\Vert _{H_{x}^{s_{x}}%
H_{v}^{s_{v},b}},
\end{align*}
and
\begin{align*}
&  \left\Vert \left(  1+t\right)  ^{s_{v}-1}\vec{E}_{\text{non}}\left(
x,t\right)  \right\Vert _{L_{\left\{  t\geq0\right\}  }^{2}H_{x}^{\frac{3}%
{2}+s_{x}}}^{2}\\
&  =\int_{0}^{\infty}\left(  1+t\right)  ^{2\left(  s_{v}-1\right)
}\left\Vert \vec{E}_{\text{non}}\left(  x,t\right)  \right\Vert _{H_{x}%
^{\frac{3}{2}+s_{x}}}^{2}dt\\
&  \leq\int_{0}^{\infty}\left(  1+t\right)  ^{2\left(  s_{v}-1\right)
}\left(  \int_{0}^{t}\left\Vert \mathcal{E}\left[  e^{\left(  t-u\right)
L_{0}}\left(  \vec{E}\partial_{v}f_{1}\right)  \left(  u\right)  \right]
\right\Vert _{H_{x}^{\frac{3}{2}+s_{x}}}du\right)  ^{2}dt\\
&  \leq\int_{0}^{\infty}\left(  1+t\right)  ^{2\left(  s_{v}-1\right)  }%
\int_{0}^{t}\left(  1+\left(  t-u\right)  \right)  ^{-2\left(  s_{v}-1\right)
}\left(  1+u\right)  ^{-2\left(  s_{v}-1\right)  }du\\
&  \ \ \ \ \ \ \cdot\int_{0}^{t}\left(  1+u\right)  ^{2\left(  s_{v}-1\right)
}\left(  1+\left(  t-u\right)  \right)  ^{2\left(  s_{v}-1\right)  }\left\Vert
\mathcal{E}\left[  e^{\left(  t-u\right)  L_{0}}\left(  \vec{E}\cdot
\partial_{v}f_{1}\right)  \left(  u\right)  \right]  \right\Vert
_{H_{x}^{\frac{3}{2}+s_{x}}}^{2}\ dudt\\
&  \leq C\int_{0}^{\infty}\int_{0}^{t}\left(  1+u\right)  ^{2\left(
s_{v}-1\right)  }\left(  1+\left(  t-u\right)  \right)  ^{2\left(
s_{v}-1\right)  }\left\Vert \mathcal{E}\left[  e^{\left(  t-u\right)  L_{0}%
}\left(  \vec{E}\cdot\partial_{v}f_{1}\right)  \left(  u\right)  \right]
\right\Vert _{H_{x}^{\frac{3}{2}+s_{x}}}^{2}\ dudt\\
&  =C\int_{0}^{\infty}\left(  1+u\right)  ^{2\left(  s_{v}-1\right)  }\int
_{u}^{\infty}\left(  1+\left(  t-u\right)  \right)  ^{2\left(  s_{v}-1\right)
}\left\Vert \mathcal{E}\left[  e^{\left(  t-u\right)  L_{0}}\left(  \vec
{E}\cdot\partial_{v}f_{1}\right)  \left(  u\right)  \right]  \right\Vert
_{H_{x}^{\frac{3}{2}+s_{x}}}^{2}\ dtdu\\
&  \leq C\int_{0}^{\infty}\left(  1+u\right)  ^{2\left(  s_{v}-1\right)
}\left\Vert \left(  \vec{E}\cdot\partial_{v}f_{1}\right)  \left(  u\right)
\right\Vert _{H_{x}^{s_{x}}H_{v}^{s_{v}-1,b}}^{2}du\\
&  \leq C\int_{0}^{\infty}\left(  1+u\right)  ^{2\left(  s_{v}-1\right)
}\left\Vert \vec{E}\left(  u\right)  \right\Vert _{H_{x}^{\frac{3}{2}+s_{x}}%
}^{2}\left\Vert f_{1}\left(  u\right)  \right\Vert _{H_{x}^{s_{x}}H_{v}%
^{s_{v},b}}^{2}du\\
&  \leq C\varepsilon_{0}^{2}\left\Vert \left(  1+t^{s_{v}-1}\right)  \vec
{E}\left(  x,t\right)  \right\Vert _{L_{\left\{  t\geq0\right\}  }^{2}%
H_{x}^{\frac{3}{2}+s_{x}}}^{2}.
\end{align*}
In the above estimate, we use the fact that
\[
\int_{0}^{t}\left(  1+\left(  t-u\right)  \right)  ^{-2\left(  s_{v}-1\right)
}\left(  1+u\right)  ^{-2\left(  s_{v}-1\right)  }du\leq C\left(  1+t\right)
^{-2\left(  s_{v}-1\right)  }%
\]
because of the assumption that $s_{v}-1>\frac{1}{2}$. The assumption
(\ref{condition-non-existence}) ensures that the following inequality is true
\[
\left\Vert \left(  \vec{E}\cdot\partial_{v}f_{1}\right)  \left(  u\right)
\right\Vert _{H_{x}^{s_{x}}H_{v}^{s_{v}-1,b}}\leq C\left\Vert \vec{E}\left(
u\right)  \right\Vert _{H_{x}^{\frac{3}{2}+s_{x}}}\left\Vert f_{1}\left(
u\right)  \right\Vert _{H_{x}^{s_{x}}H_{v}^{s_{v},b}}.
\]
Thus
\begin{align*}
&  \left\Vert \left(  1+t^{s_{v}-1}\right)  \vec{E}\left(  x,t\right)
\right\Vert _{L_{\left\{  t\geq0\right\}  }^{2}H_{x}^{\frac{3}{2}+s_{x}}}\\
&  \leq\left\Vert \left(  1+t\right)  ^{s_{v}-1}\vec{E}_{\text{lin}}\left(
x,t\right)  \right\Vert _{L_{\left\{  t\geq1\right\}  }^{2}H_{x}^{\frac{3}%
{2}+s_{x}}}+\left\Vert \left(  1+t\right)  ^{s_{v}-1}\vec{E}_{\text{non}%
}\left(  x,t\right)  \right\Vert _{L_{\left\{  t\geq0\right\}  }^{2}%
H_{x}^{\frac{3}{2}+s_{x}}}\\
&  \leq C\left\Vert f_{1}\left(  0\right)  \right\Vert _{H_{x}^{s_{x}}%
H_{v}^{s_{v},b}}+C\varepsilon_{0}\left\Vert \left(  1+t^{s_{v}-1}\right)
\vec{E}\left(  x,t\right)  \right\Vert _{L_{\left\{  t\geq0\right\}  }%
^{2}H_{x}^{\frac{3}{2}+s_{x}}}.
\end{align*}
By taking $\varepsilon_{0}=\frac{1}{2C},$ we get the estimate
(\ref{estimate-stability-integral-3d}).
\end{proof}

Theorem \ref{thm-non-existence} follows from Lemma
\ref{lemma-estimate-integral-small} and the time translation symmetry of the
Vlasov-Poisson equation. Since the arguments are exactly the same as in the
$1$D case (\cite{lin-zeng-bgk}), we skip the details.

As a corollary of Theorem \ref{thm-non-existence}, we get the following
nonlinear instability result.

\begin{corollary}
Assume $f_{0}\left(  v\right)  \in H^{s_{0},b}\left(  \mathbf{R}^{d}\right)  $
$\left(  d\geq2,\ s_{0}>\frac{3}{2},\ b>\frac{d-1}{4}\right)  \ $and the
Penrose stability condition (\ref{penrose-condition}) is satisfied for the
$x-$period tuple $\left(  T_{1},\cdots,T_{d}\right)  $. For any $\left(
s_{x},s_{v}\right)  $ satisfying (\ref{condition-non-existence}), there exists
$\varepsilon_{0}>0$ such that for any solution $\left(  f\left(  x,v,t\right)
,\vec{E}\left(  x,t\right)  \right)  $ of the Vlasov-Poisson system
(\ref{vlasov})-(\ref{poisson}) with $x-$period tuple $\left(  T_{1}%
,\cdots,T_{d}\right)  $ and $\vec{E}\left(  x,0\right)  $ not identically
zero, the following is true: \
\[
\left\Vert f\left(  T^{\ast}\right)  -f_{0}\right\Vert _{H_{x}^{s_{x}}%
H_{v}^{s_{v},b}}\geq\varepsilon_{0},\ \text{for\ some\ }T^{\ast}\in
\mathbf{R}.
\]

\end{corollary}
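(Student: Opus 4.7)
The plan is to derive this as the direct contrapositive of Theorem \ref{thm-non-existence}. Let $\varepsilon_0 > 0$ be the constant guaranteed by Theorem \ref{thm-non-existence} associated with the given data $f_0$, $(T_1,\dots,T_d)$, and the exponents $(s_x, s_v)$ satisfying (\ref{condition-non-existence}). I claim this same $\varepsilon_0$ works for the corollary.

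Suppose for contradiction that the conclusion fails: then
\[
\left\Vert f(t) - f_0 \right\Vert_{H_x^{s_x} H_v^{s_v,b}} < \varepsilon_0 \qquad \text{for all } t \in \mathbf{R}.
\]
Since $(f(x,v,t), \vec{E}(x,t))$ is a solution of (\ref{vlasov})--(\ref{poisson}) on the periodic box $T^d$ and the homogeneous equilibrium $f_0$ satisfies the Penrose stability condition (\ref{penrose-condition}), the assumption (\ref{assumption-thm-invariant-2d}) of Theorem \ref{thm-non-existence} is met. Applying that theorem yields $\vec{E}(t) \equiv \vec{0}$ for every $t \in \mathbf{R}$. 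In particular $\vec{E}(x,0) \equiv \vec{0}$, contradicting the hypothesis that $\vec{E}(x,0)$ is not identically zero. Hence there must exist some $T^{\ast} \in \mathbf{R}$ at which the $H_x^{s_x} H_v^{s_v,b}$-distance between $f(T^{\ast})$ and $f_0$ reaches $\varepsilon_0$.

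There is essentially no obstacle beyond invoking the theorem; the only remark worth making is that Theorem \ref{thm-non-existence} requires the bound to hold for \emph{all} $t \in \mathbf{R}$ (not merely $t \geq 0$), which is why the corollary's conclusion allows $T^{\ast} \in \mathbf{R}$ rather than restricting to $T^{\ast} \geq 0$. This is naturally available by the time-translation and time-reversal symmetry of the Vlasov-Poisson system already used in passing from Lemma \ref{lemma-estimate-integral-small} to Theorem \ref{thm-non-existence}.
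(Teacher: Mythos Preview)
Your proof is correct and is exactly what the paper intends: the corollary is stated immediately after Theorem \ref{thm-non-existence} with the remark that it is a direct consequence, and no separate argument is given. The contrapositive you wrote out is precisely the reasoning implicit in the paper.
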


We can also study the positive (negative) invariant structures near $\left(
f_{0}\left(  v\right)  ,0\right)  ,$ which are solutions $\left(  f\left(
t\right)  ,\vec{E}\left(  t\right)  \right)  $ of nonlinear Vlasov-Poisson
equation satisfying the conditions (\ref{assumption-thm-invariant-2d}%
)$\ $for\ all\ $t\geq0$ ($t\leq0$)$.$ The next theorem shows that the electric
field of these semi-invarint structures must decay when $t\rightarrow+\infty$
($t\rightarrow-\infty).$

\begin{theorem}
\label{thm-semi-invariant}Assume the homogeneous profile
\[
f_{0}\left(  v\right)  \in H^{s,b}\left(  \mathbf{R}^{d}\right)
\ \ \ \ \left(  d\geq2,\ s_{0}>\frac{3}{2},\ b>\frac{d-1}{4}\right)  .
\]
$\ $Assume that $f_{0}\left(  v\right)  $ satisfies the Penrose stability
condition (\ref{penrose-condition}) for $\left(  T_{1},\cdots,T_{d}\right)  $.
Let $\left(  f\left(  x,v,t\right)  ,\vec{E}\left(  x,v,t\right)  \right)  $
be a solution of (\ref{vpe}) in $T^{d}.$

For any $\left(  s_{x},s_{v}\right)  $ satisfying
(\ref{condition-non-existence}),$\ $there exists $\varepsilon_{0}>0$, such
that if
\[
\left\Vert f\left(  t\right)  -f_{0}\right\Vert _{H_{x}^{s_{x}}H_{v}^{s_{v}%
,b}}<\varepsilon_{0},\ \text{for\ all\ }t\geq0\text{ }\left(  \text{or }%
t\leq0\right)  ,
\]
with$\ $
\[
\left\Vert f\left(  0\right)  \right\Vert _{L_{x,v}^{\infty}}<\infty
,\ \int_{T^{d}}\int_{\mathbf{R}^{d}}\left\vert v\right\vert ^{2}f\left(
0,x,v\right)  dvdx<\infty,
\]
then $\left\Vert \vec{E}\left(  t,x\right)  \right\Vert _{L_{x}^{2}%
}\rightarrow0$ when $t\rightarrow+\infty$ $\left(  \text{or }t\rightarrow
-\infty\right)  .$
\end{theorem}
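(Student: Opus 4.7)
The plan is to show pointwise decay of $\|\vec{E}(t)\|_{L_{x}^{2}}$ by combining the weighted $L_{t}^{2}$ integrability from Lemma \ref{lemma-estimate-integral-small} with a uniform Lipschitz-in-$t$ estimate obtained from conservation laws. I will treat the forward-in-time case; the backward case is symmetric.

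First I would apply Lemma \ref{lemma-estimate-integral-small} to the solution $(f,\vec{E})$. Since $s_{v}>\frac{3}{2}$, the weight $(1+t)^{2(s_{v}-1)}$ satisfies $2(s_{v}-1)>1$, so in particular
\[
\int_{0}^{\infty}\Vert \vec{E}(t)\Vert _{L_{x}^{2}}^{2}\,dt\leq C\int_{0}^{\infty}(1+t)^{2(s_{v}-1)}\Vert \vec{E}(t)\Vert _{H_{x}^{3/2+s_{x}}}^{2}\,dt\leq C\varepsilon_{0}^{2}.
\]
This gives the required time integrability but not yet the pointwise decay.

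Second, I would use conserved quantities of Vlasov--Poisson to show $t\mapsto \vec{E}(t)\in L_{x}^{2}$ is uniformly Lipschitz. The $L^{\infty}_{x,v}$ norm of $f$ is preserved along characteristics, and total energy conservation $\frac{1}{2}\int\!\int|v|^{2}f\,dv\,dx+\frac{1}{2}\int|\vec{E}|^{2}\,dx=\text{const}$ bounds $\int\!\int|v|^{2}f(t)\,dv\,dx$ uniformly in $t$ thanks to the assumptions at $t=0$. Standard interpolation then yields $\Vert \rho(t)\Vert_{L_{x}^{\infty}}\leq C$, where $\rho=\int f\,dv-1$. Cauchy--Schwarz gives $|J(t,x)|^{2}\le(\rho+1)\int|v|^{2}f\,dv$ for the current $J=\int vf\,dv$, so
\[
\Vert J(t)\Vert_{L_{x}^{2}}^{2}\leq\Vert \rho+1\Vert_{L_{x}^{\infty}}\int_{\mathbf{T}^{d}}\int_{\mathbf{R}^{d}}|v|^{2}f\,dv\,dx\leq C.
\]
The continuity equation $\partial_{t}\rho+\nabla\!\cdot\!J=0$ combined with the Poisson equation yields $\partial_{t}\vec{E}=\nabla\Delta^{-1}(\nabla\!\cdot\!J)$, which is a Riesz-transform-type operator bounded on $L^{2}$, so
\[
\Vert \partial_{t}\vec{E}(t)\Vert_{L_{x}^{2}}\leq C\Vert J(t)\Vert_{L_{x}^{2}}\leq C
\]
uniformly in $t\geq 0$, i.e.\ $t\mapsto\vec{E}(t)\in L_{x}^{2}$ is Lipschitz with constant independent of $t$.

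Finally, I would invoke the elementary real-analysis fact that a uniformly Lipschitz function $g:[0,\infty)\to[0,\infty)$ with $g\in L^{2}(0,\infty)$ must satisfy $g(t)\to 0$: if $g(t_{n})\geq\delta$ along $t_{n}\to\infty$, Lipschitz continuity forces $g\geq\delta/2$ on intervals of fixed length around each $t_{n}$, contradicting $L^{2}$-integrability. Applying this to $g(t)=\Vert\vec{E}(t)\Vert_{L_{x}^{2}}$ finishes the proof. The main technical point is step two, where one must verify that the conservation-law bounds together with the hypotheses at $t=0$ propagate to a uniform $L_{x}^{2}$ bound on the current $J(t)$; everything else is either contained in Lemma \ref{lemma-estimate-integral-small} or in standard real analysis.
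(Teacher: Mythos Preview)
Your overall scheme (time-integrability of $\|\vec E\|$ from Lemma~\ref{lemma-estimate-integral-small} plus a time-regularity estimate on $\|\vec E\|_{L^2}$, then a real-analysis conclusion) is the right idea, but step two contains a genuine gap. The assertion ``standard interpolation then yields $\|\rho(t)\|_{L^\infty_x}\le C$'' is not correct: interpolating the conserved $\|f\|_{L^\infty_{x,v}}$ against the kinetic energy only gives
\[
\rho(t,\cdot)\in L^{\frac{d+2}{d}}_x\quad\text{and}\quad J(t,\cdot)\in L^{\frac{d+2}{d+1}}_x
\]
uniformly in $t$ (e.g.\ $\rho\in L^2_x$, $J\in L^{4/3}_x$ for $d=2$; $\rho\in L^{5/3}_x$, $J\in L^{5/4}_x$ for $d=3$), not $\rho\in L^\infty_x$. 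In fact for $f(x,v)=\mathbf 1_{\{|v|\le R(x)\}}$ one has $\|f\|_\infty=1$ and finite kinetic energy while $\rho(x)=cR(x)^d$ is unbounded. Hence your Cauchy--Schwarz bound $|J|^2\le(\rho+1)\int|v|^2f\,dv$ does not yield $J\in L^2_x$, and the uniform Lipschitz estimate $\|\partial_t\vec E\|_{L^2_x}\le C\|J\|_{L^2_x}\le C$ is unjustified.

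The paper avoids this by \emph{not} seeking an $L^2$ bound on $J$. Instead it computes
\[
\frac{d}{dt}\|\vec E(t)\|_{L^2_x}^2=\int_{\mathbf T^d} J\cdot\vec E\,dx
\]
and estimates the pairing by H\"older using exactly the integrability that interpolation does give: $\|J\|_{L^{(d+2)/(d+1)}_x}\|\vec E\|_{L^{d+2}_x}\le C\|\vec E\|_{H^{3/2+s_x}_x}$ (Sobolev). Since Lemma~\ref{lemma-estimate-integral-small} together with Cauchy--Schwarz in $t$ (using $2(s_v-1)>1$) gives $\|\vec E\|_{H^{3/2+s_x}_x}\in L^1_t$, the quantity $\|\vec E(t)\|_{L^2_x}^2$ has a limit as $t\to\infty$, and that limit must be $0$ by the integrability of $\|\vec E\|_{L^2_x}$. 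So your step two should be replaced by a bound on $\frac{d}{dt}\|\vec E\|_{L^2}^2$ via the pairing $\int J\cdot\vec E$ and the weaker $L^{(d+2)/(d+1)}$ control on $J$, rather than a direct $L^2$ bound on $\partial_t\vec E$.
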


\begin{proof}
By energy conservation,
\begin{align*}
&  \int_{T^{d}}\int_{\mathbf{R}^{d}}\left\vert v\right\vert ^{2}f\left(
x,v,t\right)  dvdx+\left\Vert \vec{E}\left(  x,t\right)  \right\Vert
_{L_{x}^{2}}^{2}\\
&  =\int_{T^{d}}\int_{\mathbf{R}^{d}}\left\vert v\right\vert ^{2}f\left(
x,v,0\right)  dvdx+\left\Vert E\left(  x,0\right)  \right\Vert _{L^{2}}^{2}<C.
\end{align*}
Let $j=\int vf\ dv$. When $d=2,$we have
\begin{align*}
\left\vert j\left(  t\right)  \right\vert  &  =\left\vert \int vf\ \left(
t\right)  dv\right\vert \leq\int_{\left\vert v\right\vert \leq A}\left\vert
v\right\vert dv\left\Vert f\left(  t\right)  \right\Vert _{L_{x,v}^{\infty}%
}+\frac{1}{A}\int_{\left\vert v\right\vert \geq A}\left\vert v\right\vert
^{2}fdv\\
&  \leq C\left(  \left\Vert f\left(  0\right)  \right\Vert _{L_{x,v}^{\infty}%
}A^{3}+\frac{1}{A}\int\left\vert v\right\vert ^{2}fdv\right)  \leq C\left\Vert
f\left(  0\right)  \right\Vert _{L_{x,v}^{\infty}}^{\frac{1}{4}}\left(
\int\left\vert v\right\vert ^{2}fdv\right)  ^{\frac{3}{4}},
\end{align*}
by choosing
\[
A=\left(  \int\left\vert v\right\vert ^{2}fdv/\left\Vert f\left(  0\right)
\right\Vert _{L_{x,v}^{\infty}}\right)  ^{\frac{1}{4}}.
\]
Thus
\[
\left\Vert j\left(  x,t\right)  \right\Vert _{L_{x}^{\frac{4}{3}}}\leq
C\int\int\left\vert v\right\vert ^{2}f\ dvdx\leq C.
\]
Since%
\begin{align*}
\frac{d}{dt}\left\Vert \vec{E}\left(  x,t\right)  \right\Vert _{L_{x}^{2}%
}^{2}  &  =\int_{T^{d}}j\left(  x,t\right)  \cdot\vec{E}\left(  x,t\right)
dx\\
&  \leq\left\Vert j\left(  x,t\right)  \right\Vert _{L_{x}^{\frac{4}{3}}%
}\left\Vert E\left(  x,t\right)  \right\Vert _{L_{x}^{4}}\leq C\left\Vert
E\left(  x,t\right)  \right\Vert _{H_{x}^{\frac{3}{2}}},
\end{align*}
and by Lemma \ref{lemma-estimate-integral-small}
\begin{align*}
\int_{0}^{\infty}\left\Vert E\left(  x,t\right)  \right\Vert _{H_{x}^{\frac
{3}{2}}}dt  &  \leq\left(  \int_{0}^{\infty}\left(  1+t\right)  ^{-2\left(
s-1\right)  }dt\right)  ^{\frac{1}{2}}\left(  \int_{0}^{\infty}\left(
1+t\right)  ^{2\left(  s-1\right)  }\left\Vert E\left(  x,t\right)
\right\Vert _{H_{x}^{\frac{3}{2}}}^{2}dt\right)  ^{\frac{3}{2}}\\
&  \leq C\varepsilon_{0},
\end{align*}
thus $\lim_{t\rightarrow\infty}\left\Vert \vec{E}\left(  x,t\right)
\right\Vert _{L_{x}^{2}}$ exists and equals zero. When $d=3$, the proof is
very similar. The estimates become
\[
\left\Vert j\left(  x,t\right)  \right\Vert _{L_{x}^{\frac{5}{4}}}\leq C,
\]
and%
\[
\frac{d}{dt}\left\Vert \vec{E}\left(  x,t\right)  \right\Vert _{L_{x}^{2}}%
^{2}\leq\left\Vert j\left(  x,t\right)  \right\Vert _{L_{x}^{\frac{5}{4}}%
}\left\Vert E\left(  x,t\right)  \right\Vert _{L_{x}^{5}}\leq C\left\Vert
E\left(  x,t\right)  \right\Vert _{H_{x}^{\frac{3}{2}+s_{x}}}.
\]
The rest is the same.
\end{proof}

\begin{center}
\bigskip

{\Large Acknowledgement}
\end{center}

This work is supported partly by the NSF grants DMS-0908175 (Lin) and
DMS-0801319 (Zeng).

\bigskip

\end{document}